\renewcommand*{\backrefalt}[4]{
    \ifcase #1 Not cited.%
          \or Cited on page~#2.%
          \else Cited on pages #2.%
    \fi} 
\newcommand{\TITLE}{Iterates of Quadratics and Monogenicity}
\newcommand{\TITLERUNNING}{}
\theoremstyle{plain}
\newtheorem{theorem}{Theorem}
\newtheorem{proposition}[theorem]{Proposition}
\newtheorem{lemma}[theorem]{Lemma}
\newtheorem{corollary}[theorem]{Corollary}
\theoremstyle{definition}
\theoremstyle{remark}
\newtheorem{remark}[theorem]{Remark}
\newtheorem{example}[theorem]{Example}
\newtheorem{question}[theorem]{Question}
\numberwithin{theorem}{section}
\newcommand{\tightoverset}[2]{%
  \mathop{#2}\limits^{\vbox to -.5ex{\kern-1.05ex\hbox{$#1$}\vss}}}
\numberwithin{equation}{section} 
\newcommand{\gl}{{\mathfrak{l}}}
\newcommand{\gm}{{\mathfrak{m}}}
\newcommand{\gn}{{\mathfrak{n}}}
\newcommand{\gp}{{\mathfrak{p}}}
\newcommand{\gP}{{\mathfrak{P}}}
\def\Ocal{{\mathcal O}}
\newcommand{\FF}{\mathbb{F}}
\newcommand{\QQ}{\mathbb{Q}}
\newcommand{\ZZ}{\mathbb{Z}}
\newcommand{\dnd}{\nmid}
\newcommand{\ol}[1]{\overline{#1}}
\newcommand{\Disc}{\operatorname{Disc}}
\newcommand{\ind}{\operatorname{ind}}
\newcommand{\Norm}{\operatorname{Norm}}
\newcommand{\red}{\operatorname{red}}
\title[\TITLERUNNING]{\TITLE}
\author[Hanson Smith]{Hanson Smith}
\address{Department of Mathematics\\
California State University San Marcos\\
333 S. Twin Oaks Valley Rd.\\
San Marcos, CA 92096}
\email{hsmith@csusm.edu}
\author[Zack Wolske]{Zack Wolske}
\address{Toronto, Ontario}
\email{zackwolske@gmail.com}
\keywords{Monogenic, Power integral basis, Prime splitting, Iterated polynomials}
\subjclass[2020]{11R04, 11R11, 11R21, 37P05}
\begin{document}

\sloppy 




\begin{abstract}
We investigate monogenicity and prime splitting in extensions generated by roots of iterated quadratic polynomials. Let $f(x)\in\mathbb{Z}[x]$ be an irreducible, monic, quadratic polynomial, and write $f^n(x)$ for the $n^{\text{th}}$ iterate. We obtain necessary and sufficient conditions for $f^n(x)$ to be monogenic for each $n$. We use this to construct multiple families where $f^n(x)$ is monogenic for every $n>0$.  
\end{abstract}

\maketitle


\section{Introduction}
A number field $K$, with ring of integers $\Ocal_K$ is \emph{monogenic} if $\Ocal_K=\ZZ[\alpha]$ for some algebraic integer $\alpha$. In this case the minimal polynomial of $\alpha$ is also called \textit{monogenic}. 
If $p\nmid\big[\Ocal_K:\ZZ[\alpha]\big]$, then we say the extension is \textit{$p$-maximal}. When the compositions $f^n(x)$ of a monic integer polynomial $f(x)$ with itself are all irreducible, they produce a tower of nested fields $\QQ \subset K_1 \subset K_2 \subset \cdots$, with $K_i=\QQ(\alpha_i)$ where $f(\alpha_1)=0$ and $f(\alpha_{i+1})=\alpha_i$. When each $f^i(x)$ is monogenic, we say that $f(x)$ is \emph{dynamically monogenic}. 

Previous work (\cite{AitkenIterated}, \cite{GassertChebyshev}, \cite{Castillo}, \cite{Ruofan}) has investigated dynamical monogenicity for certain families of quadratic polynomials and Chebyshev polynomials of prime degree. 
If $f(x)=x^p-a$ is irreducible for prime $p$ and integer $a$, the first author gave necessary and sufficient conditions for $f(x)$ to be dynamically monogenic \cite{SmithDynRadical}. In this paper, we give necessary and sufficient conditions for a quadratic polynomial to be dynamically monogenic: 

\begin{theorem}\label{Thm: Main}
    Let $f(x)=x^2+bx+c$ be an irreducible polynomial in $\ZZ[x]$. Let $\alpha_n$ be a root of the $n$-fold composition 
    \[f^n =\underbrace{f\circ \cdots \circ f}_{n\text{-times}},\]
    and write $K_n=\QQ(\alpha_n)$. 
    The polynomial $f(x)$ is 2-maximal if and only if the composition $f^n(x)$ is 2-maximal for all $n$ if and only if one of the following hold:
    \begin{enumerate}
    \item[$\bullet$] $b$ is odd, 
    \item[$\bullet$] $b$ is even and $c\equiv 2 \bmod 4$,
    \item[$\bullet$] $b$ is even and $b+c\equiv 1 \bmod 4$. 
    \end{enumerate}
    In the first case (when $b$ is odd), 2 is unramified. In the latter two cases, 2 is totally ramified.
    Fix $N>0$ and suppose $f^n(x)$ is irreducible for $1\leq n\leq N$. For $k<N$, write $f_{N/k}(x)$ for the minimal polynomial of $\alpha_N$ over $K_k$. Let $p$ be an odd prime. The polynomial $f^n(x)$ is $p$-maximal for each $0<n\leq N$ if and only if 
        \[ p^2 \text{ does not divide } 4^{2^{n}}f^{n}\left(-\frac{b}{2}\right)=4^{2^{n-1}}f^{n-1}\left(-\frac{\Disc(f)}{4}\right)=\Norm_{K_n/\QQ}\left(\Disc(f_{n/n-1})\right)\] for each $0<n\leq N$. Here $f^0(x)$ is the identity function.
\end{theorem}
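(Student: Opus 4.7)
The plan is to treat the $2$-maximality characterization and the odd-prime maximality criterion separately, with both halves resting on a common tower formula. The key observation is that $\alpha_{k-1}=\alpha_k^2+b\alpha_k+c\in\ZZ[\alpha_k]$, which gives $\ZZ[\alpha_k]=\ZZ[\alpha_{k-1}]\oplus\ZZ[\alpha_{k-1}]\alpha_k$ and $\Ocal_{K_{k-1}}[\alpha_k]=\Ocal_{K_{k-1}}\oplus\Ocal_{K_{k-1}}\alpha_k$ as free $\ZZ$-modules. Comparing indices yields $[\Ocal_{K_{k-1}}[\alpha_k]:\ZZ[\alpha_k]]=[\Ocal_{K_{k-1}}:\ZZ[\alpha_{k-1}]]^2$, and iterating gives
\[
[\Ocal_{K_n}:\ZZ[\alpha_n]]=\prod_{k=1}^{n}[\Ocal_{K_k}:\Ocal_{K_{k-1}}[\alpha_k]]^{2^{n-k}}.
\]
Thus $p$-maximality of $f^n$ is equivalent to relative $p$-maximality of $\Ocal_{K_{k-1}}[\alpha_k]$ inside $\Ocal_{K_k}$ for every $1\le k\le n$, and the proof reduces to analyzing each level.

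For the first half, I would apply Dedekind's criterion to $f(x)\bmod 2$: $b$ odd makes $\bar f$ squarefree (either $x(x+1)$ or $x^2+x+1$), giving $2$-maximality automatically; $b$ even and $c$ even produces $\bar f=x^2$, and Dedekind forces $c\equiv 2\pmod 4$; $b$ even and $c$ odd produces $\bar f=(x+1)^2$, and Dedekind forces $b+c\equiv 1\pmod 4$. To propagate $2$-maximality to all iterates: when $b$ is odd, $\disc(f)=b^2-4c$ is a unit at $2$, so $\disc(f_{k/k-1})=\disc(f)+4\alpha_{k-1}$ is a unit at every prime above $2$, making $2$ unramified throughout the tower and relative $2$-maximality automatic. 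When $b$ is even, so that $2$ is totally ramified in $K_1$, I would argue by induction that an appropriate element ($\alpha_k$ in the case $c\equiv 2\pmod 4$, or an analogous translate when $b+c\equiv 1\pmod 4$) is a uniformizer of the unique prime $\gp_k$ above $2$ in $\Ocal_{K_k}$. A direct Dedekind computation on $f_{k/k-1}(x)\bmod\gp_{k-1}$ then yields relative $2$-maximality, the quantity that must be a unit mod $\gp_{k-1}$ being readily controlled by the inductive uniformizer.

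For odd primes $p$, relative $p$-maximality at $\gp\mid p$ in $\Ocal_{K_{k-1}}$ reduces cleanly to the valuation condition $v_\gp(\disc(f_{k/k-1}))\le 1$. The crux of the proof, which I expect to be the main technical step, is to show this collection of local conditions is equivalent to the single integer condition $p^2\nmid\Norm_{K_{k-1}/\QQ}(\disc(f_{k/k-1}))$. Since $\disc(f_{k/k-1})=\disc(f)+4\alpha_{k-1}$ and $p$ is odd, $\gp$ divides this discriminant exactly when $\alpha_{k-1}\equiv -\disc(f)/4\pmod\gp$. As $-\disc(f)/4\in\QQ$, the reduction of $\alpha_{k-1}$ modulo $\gp$ lies in $\FF_p$, so by the Kummer--Dedekind correspondence (valid by the inductive hypothesis of $p$-maximality at level $k-1$) the irreducible factor of $f^{k-1}\bmod p$ corresponding to $\gp$ is the linear polynomial $x+\disc(f)/4$. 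In particular such $\gp$ has residue degree $1$ and is unique, so $v_p(\Norm(\disc(f_{k/k-1})))=v_\gp(\disc(f_{k/k-1}))$, which gives the desired equivalence.

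Finally, the three expressions in the criterion agree up to factors of the form $\pm 4^r$, which are units at every odd prime. The identity $f(-b/2)=-\disc(f)/4$ yields $f^n(-b/2)=f^{n-1}(-\disc(f)/4)$, and the factorization $f^{n-1}(y)=\prod_\sigma(y-\sigma(\alpha_{n-1}))$ over the embeddings $\sigma$ of $K_{n-1}$ gives
\[
\Norm_{K_{n-1}/\QQ}\bigl(\disc(f)+4\alpha_{n-1}\bigr)=\prod_\sigma 4\bigl(\sigma(\alpha_{n-1})+\tfrac{\disc(f)}{4}\bigr)=\pm\,4^{2^{n-1}}f^{n-1}\!\bigl(-\tfrac{\disc(f)}{4}\bigr),
\]
so at every odd $p$ these three quantities have equal $p$-adic valuation, completing the verification.
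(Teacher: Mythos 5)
Your proposal is correct in outline and diverges from the paper's argument in two substantive places, both legitimately. First, your tower index formula $[\Ocal_{K_n}:\ZZ[\alpha_n]]=\prod_{k=1}^{n}[\Ocal_{K_k}:\Ocal_{K_{k-1}}[\alpha_k]]^{2^{n-k}}$ packages in one identity what the paper spreads across Lemma \ref{Lem: MonoSteps} and Propositions \ref{Prop: ContinuedNotpMax} and \ref{Prop: NonMonoExt}; the reduction of global $p$-maximality to relative $p$-maximality at every level is exactly the paper's strategy, just stated more efficiently. Second, and more interestingly, for odd $p$ you prove the converse direction (that $p^2\mid\Norm_{K_{k-1}/\QQ}(\Disc(f_{k/k-1}))$ forces some $\gp^2\mid\Disc(f_{k/k-1})$) by observing that any $\gp$ dividing $\Disc(f)+4\alpha_{k-1}$ must, via Dedekind--Kummer at the $p$-maximal level $k-1$, correspond to the single linear factor $x+\Disc(f)/4$, hence is unique and of residue degree one. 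The paper instead shifts to $X=x+\tfrac{b}{2}$ over $\ZZ_p$, notes $f^{n+1}(X)\in\ZZ_p[X^2]$ so the coefficient of $X^1$ vanishes, and reads non-maximality off the principal $X$-polygon when $p^2$ divides the constant term. Both are valid; yours is more local in flavor and makes the uniqueness of the offending prime explicit, while the paper's avoids Dedekind--Kummer entirely. The one place you should flesh out is the $2$-adic induction when $b$ is even and $c$ is odd: ``readily controlled by the inductive uniformizer'' hides real work, since the uniformizer alternates between $\alpha_k$ and $\alpha_k+1$ with the parity of $k$ and the Ore/Dedekind quantity $(c-\alpha_{k-1})^2-b(c-\alpha_{k-1})+(c-\alpha_{k-1})$ must be shown to have $\gp_{k-1}$-valuation exactly $1$ in each of the subcases $c\equiv 3$ and $c\equiv 1 \bmod 4$ (the paper does this by pushing the quantity down a level with a norm and invoking the inductive hypothesis). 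The valuation computation does go through with your uniformizer bookkeeping, but it needs to be written out.
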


In Section \ref{Sec: Background}, we describe Ore's theorem and the Montes algorithm - the background results that we use to determine $p$-maximality of a quadratic extension of an arbitrary number field. Section \ref{Sec: Main} contains the proofs of the necessary and sufficient conditions in the main result and their corollaries. The major components are conditions for a relative quadratic extension to be monogenic and algebraic arguments that make $p$-maximality or non-maximality extend through the tower of fields. In Section \ref{Sec: Factorization}, we describe the splitting behaviour of the ideal $(2)$ in every case where the extension is $2$-maximal. Section \ref{Sec: Examples} contains three one-parameter families of quadratic polynomials that are each dynamically monogenic for infinitely many choices of the parameter. These arise from the connection to post-critically finite polynomials, where the set of primes to check for $p$-maximality is finite. 
A generalization to arbitrary polynomials from the perspective of critical points is forthcoming.

\section{Background: The Montes Algorithm and Ore's Theorems}\label{Sec: Background}

The Montes algorithm is a $p$-adic factorization algorithm that extends the pioneering work of {\O}ystein Ore \cite{Ore}. Though we use the notation and setup of the modern implementation, we will only make use of the aspects developed by Ore. The following is a brief summary of the tools we use; for a complete development of the Montes algorithm, see \cite{GMN}. Our notation will roughly follow \cite{ElFadilMontesNart}, which also provides a more extensive summary.

Let $p$ be an integral prime, $K$ a number field with ring of integers $\Ocal_K$, and $\gp$ a prime of $K$ above $p$. Write $K_\gp$ to denote the completion of $K$ at $\gp$. Suppose $f(x)\in \Ocal_K[x]$ is an irreducible polynomial. We extend the standard $\gp$-adic valuation to $\Ocal_K[x]$ by defining  
	\[ v_\gp\big(a_n x^n + \cdots + a_1 x + a_0\big) = \min_{0 \leq i \leq n} \big( v_\gp(a_i) \big). \]
If $\phi(x), f(x) \in \Ocal_K[x]$ are such that $\deg \phi \leq \deg f$, then we can write
    	\[f(x)=\sum_{i=0}^m a_i(x)\phi(x)^i,\]
for some $m$, where each $a_i(x) \in \Ocal_K[x]$ has degree less than $\deg \phi$. We call the above expression the \emph{$\phi$-adic development} of $f(x)$. We associate to the $\phi$-adic development of $f(x)$ an open Newton polygon by taking the lower convex hull 
of the integer lattice points $\big(i,v_p(a_i(x))\big)$. The sides of the Newton polygon with negative slope are the \emph{principal $\phi$-polygon}. The number of positive integer lattice points on or under the principal $\phi$-polygon is called the \emph{$\phi$-index of $f$} and denoted $\ind_\phi(f)$. This index is directly connected to $p$-maximality. 

Write $k_\gp$ for the finite field $\Ocal_K/\gp$, and let $\ol{f(x)}$ be the image of $f(x)$ in $k_\gp[x]$. We will develop $f(x)$ with respect to $\phi(x)$, a monic lift of an irreducible factor $\ol{\phi(x)}$ of $\ol{f(x)}$ in $k_\gp[x]$. In this situation, we will want to consider the extension of $k_\gp$ obtained by adjoining a root of $\ol{\phi(x)}$. We denote this finite field by $k_{\gp,\phi}$. We associate to each side of the principal $\phi$-polygon a polynomial in $k_{\gp,\phi}[y]$. Suppose $S$ is a side of the principal $\phi$-polygon with initial vertex $\big(s,v_\gp(a_s(x))\big)$, terminal vertex $\big(k,v_\gp(a_k(x))\big)$, and slope $-\frac{h}{e}$ written in lowest terms. Define the \emph{length} of the side to be $l(S)\coloneqq k-s$ and the \emph{degree} to be $d\coloneqq\frac{l(S)}{e}$. Let $\red:\Ocal_K[x]\to k_{\gp,\phi}$ denote the homomorphism obtained by quotienting by the ideal $\big(\gp,\phi(x)\big)$.
For each $j$ in the range $0\leq j\leq d$, we set $i=s+je$ and define the \emph{residual coefficient} as
\[c_i\coloneqq\left\{
\begin{array}{cl}
0 &\text{ if }  \big(i,v_\gp(a_i(x))\big)  \text{ lies strictly above } S  \text{ or } v_\gp(a_i(x))=\infty,\\
\red\left(a_i(x)/\pi^{v_\gp(a_i(x))}\right)  &\text{ if }  \big(i,v_\gp(a_i(x))\big) \text{ lies on } S.
\end{array}
\right.\]
Finally, the \emph{residual polynomial} of the side $S$ is the polynomial
\[R_S(y)=c_s+c_{s+e}y+\cdots +c_{s+(d-1)e}y^{d-1}+c_{s+de}y^d\in k_{\gp,\phi}[y].\]
Notice, that $c_s$ and $c_{s+de}$ are always nonzero since they correspond to the initial and terminal vertices, respectively, of the side $S$.

With all of these definitions in hand, we state two theorems that encapsulate how we will employ the Montes algorithm. The first focuses on the indices of monogenic orders.

\begin{theorem}[Ore's theorem of the index]\label{Thmofindex}
Let $f(x)\in \Ocal_K[x]$ be a monic irreducible polynomial and let $\alpha$ be a root. Choose monic polynomials $\phi_1,\dots, \phi_s \in \Ocal_K[x]$ whose reductions modulo $\gp$ are exactly the distinct irreducible factors of $\overline{f(x)} \in k_\gp[x]$. Then, 
\[v_p\left(\big[\Ocal_{K(\alpha)}:\Ocal_K[\alpha]\big]\right)\geq \ind_{\phi_1}(f)+\cdots + \ind_{\phi_s}(f).\]
Further, equality holds if, for every $\phi_i$, each side of the principal $\phi_i$-polygon has a separable residual polynomial.
\end{theorem}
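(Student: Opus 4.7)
The plan is to reduce the global statement to a local one at $\gp$, then to the case where $\ol{f}$ is a power of a single irreducible, and finally to analyze the $\phi$-adic Newton polygon side by side. Completing at $\gp$ reduces the question to computing a $\gp$-local index, since $\Ocal_K[\alpha]\otimes_{\Ocal_K}\Ocal_{K,\gp}\cong \Ocal_{K,\gp}[x]/(f)$ embeds in $\Ocal_{K(\alpha)}\otimes_{\Ocal_K}\Ocal_{K,\gp}\cong \prod_{\gP\mid\gp}\Ocal_{K(\alpha)_\gP}$ with cokernel whose $\Ocal_{K,\gp}$-length recovers the $\gp$-adic contribution to $v_p\big([\Ocal_{K(\alpha)}:\Ocal_K[\alpha]]\big)$. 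Since the $\ol{\phi_i}$ are pairwise coprime in $k_\gp[x]$, Hensel's lemma produces a factorization $f=\prod F_i$ in $\Ocal_{K,\gp}[x]$ with $\ol{F_i}=\ol{\phi_i}^{n_i}$, and the Chinese Remainder Theorem splits the local index as a sum of contributions coming from each $F_i$. Because the $\phi_i$-polygon of $f$ coincides with that of $F_i$, it suffices to prove the theorem in the single-factor case $\ol{f}=\ol{\phi}^{n}$.

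In this reduced setting I would exploit the classical Newton-polygon theorem over the discretely valued field $K_\gp$: each side $S$ of the principal $\phi$-polygon, of slope $-h/e$ in lowest terms and length $l=de$, cuts off a factor $f_S\in\Ocal_{K,\gp}[x]$ of $f$ of degree $l\cdot\deg(\phi)$ whose roots $\beta$ all satisfy $v_\gp(\phi(\beta))=h/e$ (appropriately normalized), and the product of these $f_S$ recovers $f$. Knowing $v_\gp(\phi(\beta))$ for each root allows one to compute $v_\gp(\disc(f_S))$, and combining this with the discriminant-conductor relation $v_\gp(\disc(f_S)) = v_\gp(\disc(\Ocal_S/\Ocal_{K,\gp})) + 2v_\gp([\Ocal_S:\Ocal_{K,\gp}[x]/(f_S)])$, where $\Ocal_S$ is the integral closure of $\Ocal_{K,\gp}$ in the étale $K_\gp$-algebra $K_\gp[x]/(f_S)$, shows that the contribution from side $S$ to the local index is at least the number $N(S)$ of positive integer lattice points on or under $S$. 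Summing over all sides produces $\ind_\phi(f)$ and establishes the inequality.

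For the equality assertion, when $R_S(y)\in k_{\gp,\phi}[y]$ is separable for every side $S$, each distinct irreducible factor $\psi$ of $R_S$ should lift via a generalized Hensel argument to a genuine irreducible factor of $f_S$ in $\Ocal_{K,\gp}[x]$, corresponding to a local extension of ramification index $e$ and residue degree $\deg(\phi)\cdot\deg(\psi)$. This explicit factorization pins down the local different exactly, and the discriminant-conductor comparison then shows the lower bound coming from each side is attained simultaneously. The main obstacle I anticipate is the lifting step: I need to show that separability of $R_S$ translates precisely into the factorization of $f_S$ predicted by the roots of $R_S$, and to handle the vertices of the polygon carefully so that no index contribution is missed or double-counted at the junction of two slopes. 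I would follow the modern treatment of Ore's theorem in \cite{GMN} and \cite{ElFadilMontesNart}, adapting their proofs to the notation used here.
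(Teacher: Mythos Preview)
The paper does not prove this theorem. It is stated in Section~\ref{Sec: Background} as a background result, attributed to Ore \cite{Ore} with the modern treatment referenced to \cite{GMN} and \cite{ElFadilMontesNart}; no argument is given in the paper beyond the statement itself. So there is no ``paper's own proof'' to compare your proposal against.

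That said, your sketch is a faithful outline of the standard proof as found in those references: localize at $\gp$, use Hensel to split $f$ according to the distinct residual irreducibles, reduce to the single-$\phi$ case, invoke the Newton-polygon factorization to isolate the contribution of each side, and compare discriminants via the conductor--discriminant relation to extract the lattice-point count. The equality case via separability of the residual polynomials and the associated lifting is exactly Ore's third dissection as packaged in \cite{ElFadilMontesNart}. Your identified obstacle---making the Hensel-type lifting from $R_S$ precise and accounting for vertices without double counting---is real and is precisely where the careful bookkeeping in \cite{GMN} is needed; your proposal correctly defers to those sources for the technical details rather than reinventing them.
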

For our applications, we will employ the following equivalence:

\begin{corollary}\label{iffCor}
The prime $\gp$ does not divide $\big[\Ocal_{K(\alpha)}:\Ocal_K[\alpha]\big]$ if and only if $\ind_{\phi_i}(f)=0$ for all $i$. In this case each principal $\phi_i$-polygon is one-sided.
\end{corollary}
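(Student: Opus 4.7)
The plan is to apply Theorem \ref{Thmofindex} in both directions, using a short combinatorial analysis of the Newton polygon to close the converse. For the forward direction, if $\gp$ does not divide $\big[\Ocal_{K(\alpha)}:\Ocal_K[\alpha]\big]$, then the left-hand side of Ore's inequality is zero, and since each $\ind_{\phi_i}(f)$ is a nonnegative integer, each must vanish.

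The converse is more subtle: I need to upgrade the inequality in Theorem \ref{Thmofindex} to an equality, which requires separability of the residual polynomial on every side of every principal polygon. My strategy is to show that the vanishing of $\ind_{\phi_i}(f)$ forces each principal $\phi_i$-polygon to be one-sided with a single side of degree one, so that its residual polynomial is linear and therefore automatically separable. This rests on two geometric observations. First, any interior vertex of a principal polygon is a lattice point with strictly positive coordinates: all sides have strictly negative slope and the polygon terminates at height $0$ on the right (since $f$ is monic in $\phi$), so any non-rightmost vertex has positive $y$-coordinate and any non-leftmost vertex has positive $x$-coordinate; thus two or more sides automatically contribute at least one to the $\phi$-index. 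Second, a single side of degree $d \ge 2$ passes through $d-1$ intermediate lattice points with strictly positive coordinates, each also contributing. Together these force a one-sided polygon of degree one, which incidentally establishes the final sentence of the corollary, and Theorem \ref{Thmofindex} then yields the desired equality.

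The main obstacle is the combinatorial bookkeeping of lattice points on the polygon: one must handle the endpoints carefully --- the leftmost vertex may lie on the $y$-axis and the rightmost on the $x$-axis, and these should not be mistakenly counted as positive lattice points --- so that the genuine contributions to the index come only from interior vertices and intermediate collinear points.
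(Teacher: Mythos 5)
Your proof is correct and is exactly the deduction the paper intends: the corollary is stated without proof as an immediate consequence of Theorem \ref{Thmofindex}, and your argument supplies the omitted details. In particular, your key observation --- that $\ind_{\phi_i}(f)=0$ forces each principal $\phi_i$-polygon to be one-sided of degree one, so every residual polynomial is linear, hence separable, and the equality case of Ore's theorem applies --- is precisely what justifies both the converse direction and the final sentence of the corollary.
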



The second theorem we state connects prime splitting and polynomial factorization. The ``three dissections" that we will outline below are due to Ore, and the full Montes algorithm is an extension of this. Our statement loosely follows Theorem 1.7 of \cite{ElFadilMontesNart}.

\begin{theorem}\label{Thm: Ore}[Ore's Three Dissections]
Let $f(x)\in \Ocal_K[x]$ be a monic irreducible polynomial and let $\alpha$ be a root. Suppose
\[\ol{f(x)}=\ol{\phi_1(x)}^{\ r_1}\cdots \ol{\phi_s(x)}^{\ r_s}\]
is a factorization into irreducibles in $k_\gp[x]$. Hensel's lemma shows $\ol{\phi_i(x)}^{\ r_i}$ corresponds to a factor of $f(x)$ in $K_\gp[x]$ and hence to a factor $\gm_i$ of $\gp$ in $K(\alpha)$. 

Choosing a lift $\phi_i(x)\in \Ocal_K[x]$ of $\ol{\phi_i(x)}\in k_\gp[x]$, suppose the principal $\phi_i$-polygon has sides $S_1,\dots, S_g$. Each side of this polygon corresponds to a distinct factor of $\gm_i$. 

Write $\gn_j$ for the factor of $\gm_i$ corresponding to the side $S_j$. Suppose $S_j$ has slope $-\frac{h}{e}$. If the residual polynomial $R_{S_j}(y)$ is separable, then the prime factorization of $\gn_j$ corresponds to the factorization of $R_{S_j}(y)$ in $k_{\gp,\phi_i}[y]$, but every factor of $R_{S_j}(y)$ will have an exponent of $e$. In other words,
\[\text{if } R_{S_j}(y)=\gamma_1(y)\dots\gamma_k(y) \ \text{ in }  \ k_{\gp,\phi_i}[y], \ \text{ then } \ \gn_j=\gP_1^{e}\cdots \gP_k^{e} \ \text{ in } \ K(\alpha)\]
with $\deg(\gamma_m)$ equaling the residue class degree of $\gP_m$ for each $1\leq m\leq k$.
In the case where $R_{S_j}(y)$ is not separable, further developments are required to factor $\gp$. 
\end{theorem}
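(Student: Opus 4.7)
The plan is to reduce to the complete local setting at $\gp$ and then iterate Hensel's lemma in three progressively refined forms. First I would invoke the standard correspondence between primes of $K(\alpha)$ above $\gp$ and monic irreducible factors of $f(x)$ in $K_\gp[x]$, with ramification and residue degrees read off from the corresponding local extensions; this reduces the theorem to an analysis of how $f(x)$ factors in $\Ocal_{K_\gp}[x]$.

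The first dissection is the classical Hensel lemma: since the factors $\ol{\phi_i(x)}^{\,r_i}$ are pairwise coprime in $k_\gp[x]$, they lift uniquely to a factorization $f(x)=\prod_i F_i(x)$ in $\Ocal_{K_\gp}[x]$ with $\ol{F_i}=\ol{\phi_i}^{\,r_i}$ and $\deg F_i=r_i\deg\phi_i$, and $F_i$ is the factor whose irreducible pieces correspond exactly to the primes dividing $\gm_i$.

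For the second dissection, I would fix a lift $\phi_i(x)$ and augment $v_\gp$ with a $\phi_i$-adic valuation on $\Ocal_{K_\gp}[x]$, so that the vertices of the principal $\phi_i$-polygon record precisely the pairs $\bigl(v_\gp(\cdot),v_{\phi_i}(\cdot)\bigr)$ realized on roots of $F_i$. The Newton polygon theorem over a complete discretely valued field, promoted to this two-parameter setting, then splits $F_i(x)=\prod_j F_{i,j}(x)$ with the roots of $F_{i,j}$ concentrated on the side $S_j$ of slope $-h/e$. Because $e$ must divide the ramification index of any prime in the corresponding local factor, this step produces exactly the claimed coarse factorization $\gm_i=\prod_j \gn_j$ with all ramification indices divisible by $e$.

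The third dissection handles a single side $S_j$ with separable residual polynomial $R_{S_j}(y)=\prod_m\gamma_m(y)$ in $k_{\gp,\phi_i}[y]$. Interpreting the residual coefficients $c_i$ as the next layer of a truncated $\phi_i$-adic expansion, a refined Hensel--Newton iteration splits $F_{i,j}(x)$ into pieces matching the $\gamma_m$, each producing an irreducible local factor of ramification exactly $e$ and residue class degree $\deg(\gamma_m)$, as claimed. The hardest step is verifying that the approximate factor read off from $\gamma_m$ is close enough in the combined $(v_\gp,v_{\phi_i})$ sense for the iteration to converge; this is exactly where Ore's careful construction is needed, and also where separability is essential, since the inseparable case breaks the contraction estimate and forces one into the higher-order types of the full Montes algorithm.
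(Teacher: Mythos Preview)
The paper does not prove this theorem. Theorem~\ref{Thm: Ore} is stated in Section~\ref{Sec: Background} as a background result, attributed to Ore and presented following Theorem~1.7 of \cite{ElFadilMontesNart}; no proof is given, and the authors explicitly point the reader to \cite{GMN} for a complete development. There is therefore no ``paper's own proof'' to compare your proposal against.

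That said, your sketch is a faithful outline of the standard argument as it appears in the Montes--Nart literature: pass to the completion, use Hensel to separate the $\ol{\phi_i}^{\,r_i}$-pieces, use the Newton polygon of the $\phi_i$-development to separate sides, and use separability of the residual polynomial to lift its factorization. The one place your language is slightly loose is in the second dissection, where you speak of a two-parameter valuation $(v_\gp,v_{\phi_i})$; the actual machinery constructs a single extended valuation (an ``order-one valuation'' in the Montes framework) attached to each slope, and it is the Newton polygon of the $\phi_i$-development with respect to $v_\gp$ alone that effects the splitting into sides. This is a matter of presentation rather than a genuine gap, but if you were to write the proof out in full you would want to be precise about which valuation is doing the work at each stage.
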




\section{Criteria for the Monogenicity of a Quadratic}\label{Sec: Main}
Let $K$ be an arbitrary number field. Let $f(x)=x^2+bx+c\in \Ocal_K[x]$ be irreducible and suppose $\alpha$ is a root. The discriminant is $\Disc(f)=b^2-4c$. If $\gp$ is a prime of $K$ such that $v_\gp\big(\Disc(f)\big)<2$, then $\Ocal_K[\alpha]$ is $\gp$-maximal. Thus suppose $\gp\nmid 2$ and $\gp^2\mid \Disc(f)$. Hence $b^2\equiv 4c\bmod \gp^2$. Let $t\in \Ocal_K$ be such that $2t\equiv 1 \bmod \gp^2$. We have $c\equiv (bt)^2\bmod \gp^2$. Thus,
\[x^2+bx+c\equiv (x+bt)^2\bmod \gp^2.\]
We find the $\gp$-adic development of $f(x)$ is
\[f(x)=(x+bt)^2+(b-2bt)(x+bt)+b^2t^2-b^2t+c.\]
Since $b^2\equiv 4c\bmod \gp^2$, we have
\[b^2t^2-b^2t+c\equiv 4ct^2-4ct+c\equiv c-2c+c\equiv 0\bmod \gp^2.\]
Further, $b-2bt\equiv 0\bmod \gp^2$. Therefore, $\ind_{x+bt}(f)>0$, and Ore's theorem of the index (Theorem \ref{Thmofindex}) tells us that $f(x)$ is not $\gp$-maximal and hence not monogenic.

We have shown:
\begin{lemma}\label{Lem: NonMonoQuadp}
The irreducible polynomial $f(x)=x^2+bx+c$ is $\gp$-maximal for $\gp\nmid 2$ if and only if $\gp^2\dnd \Disc(f)=b^2-4c$.
\end{lemma}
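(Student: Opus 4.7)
The lemma is an equivalence, and I would handle the two implications with different tools. The easy direction --- if $\gp^2 \nmid \Disc(f)$ then $f$ is $\gp$-maximal --- is just the classical discriminant--index identity $\Disc(f) = \bigl[\Ocal_{K(\alpha)}:\Ocal_K[\alpha]\bigr]^2 \cdot d_{K(\alpha)/K}$: the hypothesis forces the squared index to have $\gp$-valuation at most $1$, hence to be coprime to $\gp$. My plan for the nontrivial direction is to assume $\gp \nmid 2$ together with $\gp^2 \mid b^2 - 4c$, then complete the square modulo $\gp^2$ and apply Ore's theorem of the index (Theorem \ref{Thmofindex}) to a carefully chosen linear $\phi$.

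Since $2$ is a unit modulo $\gp^2$, I would pick $t \in \Ocal_K$ with $2t \equiv 1 \bmod \gp^2$ and take the monic lift $\phi(x) = x + bt$. Substituting $x = \phi(x) - bt$ produces the $\phi$-adic development
\[f(x) = \phi(x)^2 + b(1-2t)\phi(x) + \bigl(b^2 t^2 - b^2 t + c\bigr).\]
The linear coefficient $b(1-2t)$ has $v_\gp \geq 2$ by the choice of $t$. The constant coefficient is congruent to $c(2t-1)^2$ modulo $\gp^2$ after replacing $b^2$ with $4c$, and $(2t-1)^2$ already has $v_\gp \geq 4$. Hence both non-leading coefficients have $v_\gp \geq 2$, so the principal $\phi$-polygon lies on or above the segment from $(0,2)$ to $(2,0)$, placing the lattice point $(1,1)$ on or below it and yielding $\ind_\phi(f) \geq 1$.

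Ore's inequality in Theorem \ref{Thmofindex} (equivalently Corollary \ref{iffCor}) then gives $\gp \mid \bigl[\Ocal_{K(\alpha)}:\Ocal_K[\alpha]\bigr]$, so $f$ fails to be $\gp$-maximal, completing the proof. The only genuine obstacle is the short algebraic identity $b^2 t^2 - b^2 t + c \equiv c(2t-1)^2 \bmod \gp^2$, which amounts to expanding $4t^2 - 4t + 1 = (2t-1)^2$ and using $b^2 \equiv 4c \bmod \gp^2$; once this is in hand, the Newton polygon computation is entirely mechanical.
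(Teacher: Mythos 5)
Your proposal is correct and follows essentially the same route as the paper: the easy direction via the discriminant--index relation, and the converse by choosing $t$ with $2t\equiv 1\bmod\gp^2$, taking the $(x+bt)$-adic development, checking both non-leading coefficients have $\gp$-valuation at least $2$, and invoking Ore's theorem of the index (Theorem \ref{Thmofindex}). The only cosmetic difference is that you record the constant term as $c(2t-1)^2\bmod\gp^2$ where the paper simplifies $4ct^2-4ct+c\equiv 0\bmod\gp^2$ directly; these are the same computation.
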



Now suppose $\gl$ is a prime of $\Ocal_K$ with $\gl \mid 2$. If $\gl \nmid b$, then $f(x)$ is $\gl$-maximal. Conversely, if $\gl\mid b$, then we have $\gl^2\mid b^2$ and $\gl^2\mid \Disc(f)$. Further, letting $k_\gl=\Ocal_K/\gl$, write $|k_\gl|=2^h$. We have
\[x^2+bx+c\equiv x^2+c\equiv \left(x+c^{2^{h-1}}\right)^2 \bmod \gl.\]
Notice that if $f(x)$ is $\gl$-maximal, then Dedekind-Kummer factorization 
shows $\gl$ is totally ramified in $K\big(\sqrt{b^2-4c}\big)$. 

For convenience, write $c^{2^{h-1}}=\xi$. Taking the $(x+\xi)$-adic development,
\[f(x)=(x+\xi)^2+(b-2\xi)(x+\xi)-b\xi+\xi^2+c.\]
Since $\gl\mid b-2\xi$, we see that $f(x)$ is $\gl$-maximal if and only if $v_\gl(-b\xi+\xi^2+c)=1$. This can be rephrased as the following:
\begin{lemma}\label{Lem: NonMonoQuad2}
If $\gl\mid 2$, then $f(x)=x^2+bx+c$ is $\gl$-maximal if and only if either
$\gl\nmid b$ or
\[\gl\mid b \ \text{ and } \ 
c^{2^{h}}-bc^{2^{h-1}}+c \not\equiv 0 \bmod \gl^2, 
\ \text{ where } \ |k_\gl|=2^h.\]
In the latter case, $\gl$ is totally ramified in $K\big(\sqrt{b^2-4c}\big)$.
\end{lemma}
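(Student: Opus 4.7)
The lead-up to the lemma statement already does most of the work: the discriminant computation handles the case $\gl \nmid b$, and the $(x+\xi)$-adic development reduces the case $\gl \mid b$ to checking whether $v_\gl(-b\xi + \xi^2 + c) = 1$. So my plan is to package these observations, add the small verifications needed to invoke Corollary \ref{iffCor}, and then translate the valuation condition into the congruence displayed in the lemma.

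First, for $\gl \nmid b$: since $\gl \mid 2$, we have $\Disc(f) = b^2 - 4c \equiv b^2 \pmod{\gl}$ with $\gl \nmid b^2$, so $\gl$ does not divide $\Disc(f)$. Equivalently, $\ol{f(x)}$ is separable in $k_\gl[x]$ (as $\ol{f'(x)} = \ol{b} \neq 0$), so Dedekind's criterion --- the special case of Theorem \ref{Thmofindex} for separable $\ol{f}$ --- forces $f$ to be $\gl$-maximal. This handles the first clause of the ``if'' direction.

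Second, assume $\gl \mid b$. I would first verify that $v_\gl(-b\xi + \xi^2 + c) \geq 1$ is automatic: Fermat's little theorem in $k_\gl$ gives $\xi^2 = c^{2^h} \equiv c \pmod{\gl}$, so $-b\xi + \xi^2 + c \equiv -b\xi + 2c \equiv 0 \pmod{\gl}$ since $\gl$ divides both $b$ and $2$. Then I would apply Corollary \ref{iffCor} to the principal $\phi$-polygon for $\phi = x + \xi$, which has endpoints $(2,0)$ and $(0, v_\gl(-b\xi + \xi^2 + c))$ with the middle point $(1, v_\gl(b - 2\xi))$ satisfying $v_\gl(b - 2\xi) \geq 1$. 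If the constant-term valuation equals $1$, the polygon is a single segment from $(0,1)$ to $(2,0)$ of slope $-1/2$: no lattice point $(i,j)$ with $i, j \geq 1$ lies on or under it, so $\ind_{x+\xi}(f) = 0$, and the residual polynomial on the side has degree $d = l/e = 2/2 = 1$, hence is automatically separable. If the constant-term valuation is $\geq 2$, the lattice point $(1,1)$ lies on or under the polygon, giving $\ind_{x+\xi}(f) \geq 1$ and non-maximality. This yields $\gl$-maximality iff $-b\xi + \xi^2 + c \not\equiv 0 \pmod{\gl^2}$, which upon substituting $\xi = c^{2^{h-1}}$ is exactly the stated congruence.

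For the total ramification assertion in the $\gl \mid b$ case, $\ol{f(x)} = (x+\xi)^2$ in $k_\gl[x]$, and if $f$ is $\gl$-maximal then Dedekind-Kummer gives $\gl\Ocal_{K(\alpha)} = \gP^2$; since $K(\alpha) = K\bigl(\sqrt{b^2 - 4c}\bigr)$, $\gl$ is totally ramified in this quadratic extension. There is no serious obstacle here --- the proof is a careful packaging of the calculations preceding the lemma --- but the subtlest point is justifying that $\ind_{x+\xi}(f) = 0$ precisely captures $\gl$-maximality, which relies on the residual polynomial on the single side of slope $-1/2$ being separable (automatic since it is linear) so that the equality case of Ore's theorem applies.
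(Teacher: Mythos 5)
Your proposal is correct and follows essentially the same route as the paper, which derives the lemma directly from the preceding $(x+\xi)$-adic development and the observation that $\gl$-maximality is equivalent to $v_\gl(-b\xi+\xi^2+c)=1$. The extra details you supply (the explicit Newton polygon endpoints, the check that the single side of slope $-1/2$ has a linear, hence separable, residual polynomial, and the Frobenius identity $c^{2^h}\equiv c \bmod \gl$) are exactly the verifications the paper leaves implicit.
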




\subsection{The Main Theorem}
We restate our main theorem for the ease of the reader.\\

\noindent\textbf{Theorem \ref{Thm: Main}.} \textit{Let $f(x)=x^2+bx+c$ be an irreducible polynomial in $\ZZ[x]$. Let $\alpha_n$ be a root of the $n$-fold composition $f^n(x)$,
    and write $K_n=\QQ(\alpha_n)$. 
    The polynomial $f(x)$ is 2-maximal if and only if the composition $f^n(x)$ is 2-maximal for all $n$ if and only if one of the following hold:
    \begin{enumerate}
    \item[$\bullet$] $b$ is odd, 
    \item[$\bullet$] $b$ is even and $c\equiv 2 \bmod 4$,
    \item[$\bullet$] $b$ is even and $b+c\equiv 1 \bmod 4$. 
    \end{enumerate}
    In the first case (when $b$ is odd), 2 is unramified. In the latter two cases, 2 is totally ramified.
    Fix $N>0$ and suppose $f^n(x)$ is irreducible for $1\leq n\leq N$. For $k<N$, write $f_{N/k}(x)$ for the minimal polynomial of $\alpha_N$ over $K_k$. Let $p$ be an odd prime. The polynomial $f^n(x)$ is $p$-maximal for each $0<n\leq N$ if and only if 
        \[ p^2 \text{ does not divide } 4^{2^{n}}f^{n}\left(-\frac{b}{2}\right)=4^{2^{n-1}}f^{n-1}\left(-\frac{\Disc(f)}{4}\right)=\Norm_{K_n/\QQ}\left(\Disc(f_{n/n-1})\right)\] for each $0<n\leq N$. Here $f^0(x)$ is the identity function.}

The main results of \cite{AyadMcQuillan} are useful to establish irreducibility of iterates: Let $f(x)=x^2+bx+c\in\ZZ[x]$ and $\Disc(f)=b^2-c$. Suppose $f$ is irreducible. If $\Disc(f)\equiv 1 \bmod 4$, or if $\Disc(f)\equiv 0 \bmod 4$ but $\Disc(f)\not\equiv 0 \bmod 16$, then all iterates of $f$ are irreducible.\footnote{Every iterate being irreducible is called being \textit{stable} or \textit{dynamically irreducible} in the literature.} We check that if $f(x)$ is 2-maximal, then $f(x)$ satisfies a condition so that all iterates are irreducible: 
\begin{enumerate}
    \item[$\bullet$] $b$ odd $\implies$ $\Disc(f)\not\equiv 0\bmod 4$. $\checkmark$ 
    \item[$\bullet$] $b$ even and $c\equiv 2 \bmod 4$ $\implies$ $\Disc(f)\equiv b^2-8 \bmod 16.$ $\checkmark$
    \item[$\bullet$] $b\equiv 2 \bmod 4$ and $c \equiv 3 \bmod 4$ $\implies$ $\Disc(f)\equiv 4-12\equiv 8\bmod 16.$ $\checkmark$
    \item[$\bullet$] $b\equiv 0 \bmod 4$ and $c \equiv 1 \bmod 4$ $\implies$ $\Disc(f)\equiv 0-4\equiv 12\bmod 16.$ $\checkmark$
\end{enumerate}

\begin{proof}[Proof of 2-maximality.]
    First, note that Lemma \ref{Lem: NonMonoQuad2} gives the equivalence of the 2-maximality of $f(x)$ and one of the three conditions holding. Thus, we need only prove that each condition is sufficient for the 2-maximality of $f^n(x)$ for all $n\geq 1$.
    
    In the case where $b$ is odd, we again verify $2\dnd \Disc(f)=b^2-4c$. Thus $f(x)$ is 2-maximal and 2 is unramified in $K_1$. Suppose $\ZZ[\alpha_k]$ is $2$-maximal for all $k\leq n$ and 2 is unramified in $K_n$. The minimal polynomial of $\alpha_{n+1}$ over $K_n$ is $x^2+bx+c-\alpha_n$. The discriminant is $b^2-4(c-\alpha_n)$. Since no prime of $\Ocal_{K_n}$ above 2 divides this discriminant, we see $\Ocal_{K_n}[\alpha_{n+1}]$ is 2-maximal and 2 is unramified in $K_{n+1}$. Lemma \ref{Lem: MonoSteps} shows that $\ZZ[\alpha_{n+1}]$ is 2-maximal. Thus, we have shown that $f^n(x)$ is 2-maximal for every $n$.

    Suppose now that $b$ is even and $c\equiv 2 \bmod 4$. Thus $f(x)$ is 2-Eisenstein. A lemma of Odoni \cite[Lemma 1.3]{Odoni} shows that $f^n(x)$ is 2-Eisenstein for each $n$. Thus $\ZZ[\alpha_n]$ is 2-maximal and 2 is totally ramified in $K_n$ for every $n$. 

    We come to our most involved cases for 2-maximality: $b\equiv 2 \bmod 4$ and $c \equiv 3 \bmod 4$ or $b\equiv 0 \bmod 4$ and $c \equiv 1 \bmod 4$. As mentioned above, if either of these conditions are satisfied, Lemma \ref{Lem: NonMonoQuad2} shows $f(x)$ is 2-maximal. Further, the ideal $(2)$ is totally ramified. This supplies our base case. 
    
    Suppose that $b\equiv 2 \bmod 4$ and $c \equiv 3 \bmod 4$ or $b\equiv 0 \bmod 4$ and $c \equiv 1 \bmod 4$, that $f^k(x)$ is 2-maximal for all $k\leq n\geq 1$, and that $(2)=\gl^{2^n}$ in $\Ocal_{K_n}$. From the relative minimal polynomial of $\alpha_{n+1}$ and Lemma \ref{Lem: MonoSteps}, we see that $f^{n+1}(x)$ is 2-maximal if and only if $x^2+bx+c-\alpha_n$ is $\gl$-maximal. Thus, applying Lemma \ref{Lem: NonMonoQuad2}, we consider $(c-\alpha_n)^2-b(c-\alpha_n)+c-\alpha_n\bmod \gl^2$. Recalling $\gl^2\mid 2$,
    \begin{align*}
        (c-\alpha_n)^2-b(c-\alpha_n)+c-\alpha_n&\equiv c^2-2c\alpha_n+\alpha_n^2-bc+b\alpha_n+c-\alpha_n \\
        &\equiv c^2+c +\alpha_n^2-\alpha_n\\
        &\equiv \alpha_n^2-\alpha_n\\
        &\equiv -b\alpha_n-c+\alpha_{n-1}-\alpha_n\\
        &\equiv c+\alpha_{n-1}+\alpha_n\bmod \gl^2.
    \end{align*}
Taking norms and using the fact $\gl$ is totally ramified\footnote{See Lemma 3.3 of \cite{SmithDynRadical}.}, we see $c+\alpha_{n-1}+\alpha_n \equiv 0 \bmod \gl^2$ if and only if $\Norm_{K_n/K_{n-1}}\big(c+\alpha_{n-1}+\alpha_n\big)\equiv 0 \bmod \Norm_{K_n/K_{n-1}}(\gl)^2$. 

If $n=1$, then $\Norm_{K_n/K_{n-1}}(\gl)^2=4$ and $\alpha_{n-1}=0$. Using overlines to indicate conjugates,
\begin{align*}
    \Norm_{K_1/\QQ}\left(c+\alpha_1\right)&\equiv \left(c+\alpha_1\right)\left(c+\ol{\alpha_1}\right)\\
    &\equiv c^2+c\left(\alpha_1+\ol{\alpha_1}\right)+\alpha_1\ol{\alpha_1}\\
    &\equiv c^2-bc+c \bmod 4.
\end{align*}
Conveniently, this is simply our base case. As $c^2-bc+c\not\equiv 0 \bmod 4$, we see $f^2(x)$ is 2-maximal and $\gl$ is totally ramified in $K_{2}$.

If $n>1$, then we have $\Norm_{K_n/K_{n-1}}(\gl)^2 \mid 2$. We compute,  
\begin{align*}
    \Norm_{K_n/K_{n-1}}\left(c+\alpha_{n-1}+\alpha_n\right)&\equiv \left(c+\alpha_{n-1}+\alpha_n\right)\left(c+\alpha_{n-1}+\ol{\alpha_n}\right)\\
    &\equiv c^2 + \alpha_{n-1}^2 +\alpha_{n-1}\left(\alpha_n+\ol{\alpha_n}\right) + c\left(\alpha_n+\ol{\alpha_n}\right)+\alpha_n\ol{\alpha_n}\\
    &\equiv c^2 + \alpha_{n-1}^2-\alpha_{n-1}b - cb+c - \alpha_{n-1}\\
    &\equiv \alpha_{n-1}^2 - \alpha_{n-1} \bmod \Norm_{K_n/K_{n-1}}(\gl)^2.\\
\end{align*}
Our induction hypothesis shows $\alpha_{n-1}^2 - \alpha_{n-1}\not\equiv 0 \bmod \Norm_{K_n/K_{n-1}}(\gl)^2$ since $f^n(x)$ is $2$-maximal. Thus $f^{n+1}(x)$ is 2-maximal and $\gl$ (hence also 2) is totally ramified in $K_{n+1}$. 
\end{proof}

\begin{proof}[Proof of $p$-maximality]
    We proceed by induction. Let $p$ be an odd prime. Lemma \ref{Lem: NonMonoQuadp} supplies our base case: $f(x)$ is $p$-maximal if and only if $p^2\dnd b^2-4c=\Disc(f)$. 

    Now suppose that $f^k(x)$ is $p$-maximal and $p^2\dnd \Norm_{K_k/\QQ}\big(\Disc(f_{k+1/k})\big)$ for all $k\leq n$. We see $f^{n+1}(x)$ is $p$-maximal if and only if $f_{n+1/n}(x)=x^2+bx+c-\alpha_n$ is $\gp$-maximal for each prime $\gp$ of $\Ocal_{K_N}$ above $p$. For a given $\gp$, Lemma \ref{Lem: NonMonoQuadp} shows this occurs if and only if $\gp^2\dnd b^2-4(c-\alpha_n)=\Disc(f_{n+1/n})$. 
    
    Taking norms, it is clear that  $\gp^2 \mid \Disc(f_{n+1/n})\implies p^2\mid \Norm_{K_n/\QQ}\big(\Disc(f_{n+1/n})\big)$. A computation with a complete factorization of $f^n(x)$ shows that 
    \[\Norm_{K_n/\QQ}\big(\Disc(f_{n+1/n})\big)=4^{2^n}f^n\left(-\frac{\Disc(f)}{4}\right).\] 
    
    For the converse, suppose $p^2\mid \Norm_{K_n/\QQ}\big(\Disc(f_{n+1/n})\big)$. 
    As 
    \[f(x)=x^2+bx+c=\left(x+\frac{b}{2}\right)^2-\frac{b^2}{4}+c,\] we see that $f^{n+1}(x)$ is a polynomial in $\ZZ_p\big[(x+\frac{b}{2})^2\big]$. Making the change of variables $X=x+\frac{b}{2}$, we have $f^{n+1}(X)\in \ZZ_p[X^2]$. Since $\frac{b}{2}\in \ZZ_p$, we see that this change of variables preserves the $p$-maximality or failure of $p$-maximality of $f^{n+1}$. The constant coefficient of $f^{n+1}(X)$ is $f^{n+1}(0)=f^n\big(-\frac{b^2}{4}+c\big)$. Since $f^{n+1}(X)\in \ZZ_p[X^2]$, our supposition that $p^2$ divides $f^n\big(-\frac{b^2}{4}+c\big)$ shows that $f^{n+1}(X)$ has a multiple root modulo $p$. Developing with respect to $X$, we have that $v_p\big(f^{n+1}(X=0)\big)\geq 2$. Using Theorem \ref{Thmofindex}, the lattice points on or below the principal $X$-polygon show that $f^{n+1}(X)$, and hence $f^{n+1}(x)$, is not $p$-maximal. Induction yields the result.
\end{proof}



We shift our attention to establishing a few results that were used in the proof above.

\begin{lemma}\label{Lem: MonoSteps}
Let $f(x)=x^2+bx+c$ and keep the notation as above. Suppose $f^N(x)$ is monogenic for some $N>0$. If $\Ocal_{K_N}[\alpha_{N+1}]=\Ocal_{K_{N+1}}$, then $\ZZ[\alpha_{N+1}]=\Ocal_{K_{N+1}}$.
\end{lemma}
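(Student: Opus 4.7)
The plan is to use the defining relation $f(\alpha_{N+1})=\alpha_N$ to embed $\Ocal_{K_N}$ into $\ZZ[\alpha_{N+1}]$ and then chain the two hypotheses. The assertion is really just that monogenicity composes along the tower, in the sense that a power integral basis below plus a relative power integral basis above gives a power integral basis in total, provided the relative generator also generates the base generator (which it does here because $f$ is a polynomial with integer coefficients applied to $\alpha_{N+1}$).

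First, I would observe that from $\alpha_{N+1}^2+b\alpha_{N+1}+c=\alpha_N$ and the fact that $b,c\in\ZZ$, we get
\[
\alpha_N=\alpha_{N+1}^2+b\alpha_{N+1}+c\in\ZZ[\alpha_{N+1}],
\]
so $\ZZ[\alpha_N]\subseteq\ZZ[\alpha_{N+1}]$. Second, I would invoke the hypothesis that $f^N$ is monogenic, i.e., $\Ocal_{K_N}=\ZZ[\alpha_N]$, which upgrades the previous inclusion to $\Ocal_{K_N}\subseteq\ZZ[\alpha_{N+1}]$.

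Third, combine this with the assumption $\Ocal_{K_{N+1}}=\Ocal_{K_N}[\alpha_{N+1}]$ to compute
\[
\Ocal_{K_{N+1}}=\Ocal_{K_N}[\alpha_{N+1}]=\ZZ[\alpha_N,\alpha_{N+1}]\subseteq\ZZ[\alpha_{N+1}]\subseteq\Ocal_{K_{N+1}},
\]
forcing equality throughout. That yields $\ZZ[\alpha_{N+1}]=\Ocal_{K_{N+1}}$, as desired.

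There is no real obstacle here — the argument is purely formal and relies only on the observation that $\alpha_N$ is a polynomial in $\alpha_{N+1}$ with integer coefficients. The only thing to double-check is that one is allowed to substitute $\Ocal_{K_N}=\ZZ[\alpha_N]$ into $\Ocal_{K_N}[\alpha_{N+1}]$ without subtlety, which is immediate from the definition of the ring generated by a set. If one wanted, one could also phrase this via indices: $[\Ocal_{K_{N+1}}:\ZZ[\alpha_{N+1}]]$ is a divisor of $[\Ocal_{K_{N+1}}:\Ocal_{K_N}[\alpha_{N+1}]]\cdot[\Ocal_{K_N}:\ZZ[\alpha_N]]^{[K_{N+1}:K_N]}$, and both factors are $1$ by hypothesis, but the containment argument above is shorter and avoids any appeal to the tower formula for the index.
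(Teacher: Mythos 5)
Your argument is correct and is essentially the paper's own proof: both use the relation $\alpha_{N+1}^2+b\alpha_{N+1}+c=\alpha_N$ to get $\ZZ[\alpha_N]\subseteq\ZZ[\alpha_{N+1}]$, then substitute $\Ocal_{K_N}=\ZZ[\alpha_N]$ to identify $\Ocal_{K_N}[\alpha_{N+1}]$ with $\ZZ[\alpha_{N+1}]$. Nothing further is needed.
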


\begin{proof}
    Note that $\alpha_{N+1}$ satisfies $x^2+bx+c-\alpha_N$, so $\alpha_{N+1}^2+b\alpha_{N+1}+c=\alpha_N$. Thus $\ZZ[\alpha_N]\subset \ZZ[\alpha_{N+1}]$. Since $\Ocal_{K_N}=\ZZ[\alpha_N]$, we have $\Ocal_{K_N}[\alpha_{N+1}]=\ZZ[\alpha_N][\alpha_{N+1}]=\ZZ[\alpha_{N+1}]$.
\end{proof}



\begin{proposition}\label{Prop: ContinuedNotpMax}
    If $p\mid \big[\Ocal_{K_n}:\ZZ[\alpha_n]\big]$, then $p\mid \big[\Ocal_{K_m}:\ZZ[\alpha_m]\big]$ for all $m>n$. In particular, $\ZZ[\alpha_m]\cap \Ocal_{K_n}$ is not $p$-maximal for any $m>n$.
\end{proposition}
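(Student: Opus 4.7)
The plan is to prove the proposition by first establishing the key equality $\ZZ[\alpha_m] \cap \Ocal_{K_n} = \ZZ[\alpha_n]$ for every $m > n$, from which both claims of the proposition follow quickly. The containment $\ZZ[\alpha_n] \subset \ZZ[\alpha_m] \cap \Ocal_{K_n}$ is immediate from the relation $\alpha_n = f^{m-n}(\alpha_m) \in \ZZ[\alpha_m]$. For the reverse, I would observe that $\alpha_m$ is a root of $f^{m-n}(x) - \alpha_n$, a monic polynomial of degree $2^{m-n} = [K_m:K_n]$ with coefficients in $\ZZ[\alpha_n]$ (the standing assumption that the iterates are irreducible makes this the minimal polynomial of $\alpha_m$ over $K_n$). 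Consequently $\{1,\alpha_m,\alpha_m^2,\dots,\alpha_m^{2^{m-n}-1}\}$ is simultaneously a basis for $\ZZ[\alpha_m]$ as a free $\ZZ[\alpha_n]$-module and for $K_m$ as a $K_n$-vector space. Any element of $\ZZ[\alpha_m]$ thus has unique coordinates in $\ZZ[\alpha_n]$ with respect to this basis and lies in $K_n$ precisely when the coordinates of $\alpha_m,\dots,\alpha_m^{2^{m-n}-1}$ vanish, i.e., precisely when it already lies in $\ZZ[\alpha_n]$.

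The ``In particular'' clause then follows at once: the intersection $\ZZ[\alpha_m] \cap \Ocal_{K_n}$ is literally $\ZZ[\alpha_n]$, whose index in $\Ocal_{K_n}$ is divisible by $p$ by hypothesis, so it is not $p$-maximal.

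For the main assertion I would use index multiplicativity along the chain of $\ZZ$-modules $\ZZ[\alpha_m] \subset \Ocal_{K_n}[\alpha_m] \subset \Ocal_{K_m}$. The middle ring is a free $\Ocal_{K_n}$-module on the same basis $\{1,\alpha_m,\dots,\alpha_m^{2^{m-n}-1}\}$, so the relative index decomposes coordinate-wise:
\[ \bigl[\Ocal_{K_n}[\alpha_m] : \ZZ[\alpha_m]\bigr] = \bigl[\Ocal_{K_n} : \ZZ[\alpha_n]\bigr]^{2^{m-n}}, \]
and this in turn divides $[\Ocal_{K_m} : \ZZ[\alpha_m]]$. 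Hence $p \mid [\Ocal_{K_n}:\ZZ[\alpha_n]]$ forces $p \mid [\Ocal_{K_m}:\ZZ[\alpha_m]]$. The main technical point is the freeness claim used in the first paragraph; once that is in place, the rest is formal module-index bookkeeping, with no deep input beyond the explicit form of the relative minimal polynomial $f^{m-n}(x)-\alpha_n$ being monic over $\ZZ[\alpha_n]$.
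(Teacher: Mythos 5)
Your argument is correct, but it follows a different route than the paper's proof of this proposition. The paper argues directly with an explicit element: from $p\mid\big[\Ocal_{K_n}:\ZZ[\alpha_n]\big]$ it takes an algebraic integer $\gamma=\frac{1}{p}\sum_i a_i\alpha_n^i$ with not all $a_i$ divisible by $p$, normalizes so the leading coefficient is prime to $p$, substitutes $\alpha_n=f^{m-n}(\alpha_m)$, and uses uniqueness of coordinates in the $\QQ$-basis $\{\alpha_m^i\}$ of $K_m$ to conclude $\gamma\notin\ZZ[\alpha_m]$, so that $\ZZ[\alpha_m]$ and $\ZZ[\alpha_m]\cap\Ocal_{K_n}$ both fail to be $p$-maximal. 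You instead prove the structural facts that $\ZZ[\alpha_m]$ is free over $\ZZ[\alpha_n]$ on $1,\alpha_m,\dots,\alpha_m^{2^{m-n}-1}$ (using that $f^{m-n}(x)-\alpha_n$ is monic of degree $[K_m:K_n]$), deduce $\ZZ[\alpha_m]\cap\Ocal_{K_n}=\ZZ[\alpha_n]$ — which is a sharper form of the ``in particular'' clause than the paper states — and obtain the divisibility by index multiplicativity along $\ZZ[\alpha_m]\subset\Ocal_{K_n}[\alpha_m]\subset\Ocal_{K_m}$ with $\big[\Ocal_{K_n}[\alpha_m]:\ZZ[\alpha_m]\big]=\big[\Ocal_{K_n}:\ZZ[\alpha_n]\big]^{2^{m-n}}$. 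This coordinate-wise index computation is essentially the paper's own proof of the subsequent, stronger Proposition \ref{Prop: NonMonoExt}; your version therefore proves more than is needed here (it gives $p^{2^{m-n}}$ dividing the index, not just $p$), at the cost of setting up the freeness claim, whereas the paper's element-chasing proof is more elementary and self-contained for this particular statement.
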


\begin{proof}
    Suppose $p\mid \big[\Ocal_{K_n}:\ZZ[\alpha_n]\big]$. Thus, there is an algebraic integer of the form
    \[\gamma=\frac{a_{2^n-1}\alpha_n^{2^n-1}+\cdots +a_1\alpha_n+a_0}{p},\]
    with each $a_i\in\ZZ$ and not all divisible by $p$. We can assume $a_{2^n-1}$ is not divisible by $p$ by subtracting $\frac{a_{2^n-1}\alpha_n^{2^n-1}}{p}$ and continuing to subtract terms that are divisible by $p$ until a leading coefficient that is not divisible by $p$ is obtained. 

    The minimal polynomial for $\alpha_m$ over $K_n$ is $f^{m-n}(x)=\alpha_n$. Further, $f^{m-n}(x)-\alpha_n$ is monic and irreducible of degree $2^{m-n}$. We have 
    \[\gamma=\frac{a_{2^n-1}f^{m-n}\big(\alpha_m\big)^{2^n-1}+\cdots +a_1f^{m-n}\big(\alpha_m\big)+a_0}{p}.\]
    Expanding, 
    \[\gamma=\frac{a_{2^n-1}\alpha_m^{2^m-2^{m-n}}+\text{lower order terms}}{p}.\]
    Since $\{\alpha_m^i\}_{i=0}^{2^m-1}$ yields a $\QQ$-basis for $K_m$, this representation of $\gamma$ is unique. Hence $\gamma\notin \ZZ[\alpha_m]$, and $\ZZ[\alpha_m]\cap \Ocal_{K_n}$ is not $p$-maximal.
\end{proof}

As a porism, we have the following:

\begin{proposition}
    Suppose $\ZZ[\beta]$ is not $p$-maximal. If $\beta=g(\delta)$ for some monic $g(x)\in \ZZ[x]$ and some algebraic integer $\delta$, then $\ZZ[\delta]$ is not $p$-maximal.
\end{proposition}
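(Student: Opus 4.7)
The plan is to mimic the proof of Proposition \ref{Prop: ContinuedNotpMax} essentially verbatim. Set $K=\QQ(\beta)$ and $L=\QQ(\delta)$; then $K\subseteq L$ since $\beta=g(\delta)$. Non-$p$-maximality of $\ZZ[\beta]$ in $\Ocal_K$ yields an algebraic integer
\[ \gamma \;=\; \frac{a_{m-1}\beta^{m-1}+\cdots+a_1\beta+a_0}{p} \;\in\; \Ocal_K\setminus \ZZ[\beta], \]
with $a_i\in\ZZ$ not all divisible by $p$, where $m=[K:\QQ]$. By subtracting off any top terms $\frac{a_i\beta^i}{p}$ whose coefficients happen to be divisible by $p$ (each such term already lies in $\ZZ[\beta]$ and so leaves $\gamma\bmod\ZZ[\beta]$ unchanged), we may normalize so that $a_{m-1}\not\equiv 0\bmod p$.

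Substituting $\beta=g(\delta)$ produces $p\gamma=h(\delta)$ for the integer polynomial $h(x)=\sum_{i=0}^{m-1}a_i\,g(x)^i$. Since $g$ is monic of degree $d$, the polynomial $h(x)$ has leading term $a_{m-1}x^{d(m-1)}$ with coefficient coprime to $p$. Expressing $\gamma=h(\delta)/p$ in the $\QQ$-basis $\{1,\delta,\ldots,\delta^{n-1}\}$ of $L$, where $n=[L:\QQ]$: provided $d(m-1)<n$, no reduction against the minimal polynomial of $\delta$ is needed at the top, so the coefficient of $\delta^{d(m-1)}$ in $\gamma$ is precisely $a_{m-1}/p\notin\ZZ$, which forces $\gamma\notin\ZZ[\delta]$. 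Combined with $p\gamma\in\ZZ[\beta]\subseteq\ZZ[\delta]$ (using $\beta=g(\delta)\in\ZZ[\delta]$) and $\gamma\in\Ocal_K\subseteq\Ocal_L$, this shows $p\mid\big[\Ocal_L:\ZZ[\delta]\big]$, i.e.\ $\ZZ[\delta]$ is not $p$-maximal.

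The main obstacle is the degree comparison $d(m-1)<n$: this is what guarantees the leading coefficient survives reduction modulo the minimal polynomial of $\delta$. It is automatic when $g(x)-\beta$ is the minimal polynomial of $\delta$ over $K$—equivalently $d=n/m$, in which case $d(m-1)=n-d<n$—which is precisely the situation in the iteration setting of Proposition \ref{Prop: ContinuedNotpMax}, where $g=f^{m-n}$ has the correct degree $2^{m-n}=[K_m:K_n]$.
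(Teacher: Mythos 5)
Your argument is the paper's own: the porism is meant to be read off from the proof of Proposition \ref{Prop: ContinuedNotpMax}, and you have reproduced that extraction faithfully. More importantly, you have put your finger on exactly the step that does not survive the generalization: the inequality $d(m-1)<n$, which is what guarantees that $h(x)=\sum_i a_i g(x)^i$ needs no reduction modulo the minimal polynomial of $\delta$, so that the coefficient $a_{m-1}/p$ of $\delta^{d(m-1)}$ is visibly non-integral. In the iterated setting one has $\deg g=[\QQ(\delta):\QQ(\beta)]$, hence $d(m-1)=n-d<n$; in general one only knows $n\le dm$, which is the wrong direction.

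This is not a gap you could have closed with more work: without a degree hypothesis the proposition itself is false. Take $\delta=\tfrac{1+\sqrt{5}}{2}$ and $g(x)=x^2+x$, so that $\beta=g(\delta)=2\delta+1=2+\sqrt{5}$. Then $\ZZ[\beta]=\ZZ[\sqrt{5}]$ has index $2$ in $\Ocal_{\QQ(\sqrt{5})}$, so $\ZZ[\beta]$ is not $2$-maximal, while $\ZZ[\delta]=\Ocal_{\QQ(\sqrt{5})}$ is. The failure occurs exactly where you warned: here $d(m-1)=2=n$, and indeed $\gamma=\tfrac{\beta+1}{2}$ gives $h(x)=x^2+x+1$ with $h(\delta)=2\delta+2$ after reducing modulo $x^2-x-1$, so every coefficient becomes divisible by $2$ and $\gamma=\delta+1\in\ZZ[\delta]$. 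The correct conclusion of your analysis is therefore that the porism needs the additional hypothesis $\deg g=[\QQ(\delta):\QQ(\beta)]$ (for instance, $g(x)-\beta$ irreducible over $\QQ(\beta)$, as holds for $f^{m-n}(x)-\alpha_n$ in Proposition \ref{Prop: ContinuedNotpMax}); under that hypothesis your proof is complete. One cosmetic point, present in the paper's proof as well: after discarding top terms with $p\mid a_i$, the surviving leading term may sit in some degree $j<m-1$, so the unit coefficient is some $a_j$ rather than $a_{m-1}$; since $jd\le d(m-1)<n$ this does not affect the argument.
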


Once $\ZZ[\alpha_n]$ fails to be $p$-maximal, we can obtain a lower bound on the powers of $p$ dividing the index of further iterates.

\begin{proposition}\label{Prop: NonMonoExt}
    If $p\mid \big[\Ocal_{K_n}:\ZZ[\alpha_n]\big]$, then $p^{2^{(m-n)}}\mid \big[\Ocal_{K_m}:\ZZ[\alpha_m]\big]$ for all $m>n$. More generally, suppose $g(x)\in \ZZ[x]$ is monic of degree $d$ and such that the first $m$ iterates are irreducible. Let $\beta_n$ be a root of $g^n(x)$ and write $L_n=\QQ(\beta_n)$. If $p^k\mid \big[\Ocal_{L_n}:\ZZ[\beta_n]\big]$, then $\big(p^k\big)^{d^{m-n}}$ divides $\big[\Ocal_{L_m}:\ZZ[\beta_m]\big]$.
\end{proposition}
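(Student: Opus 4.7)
The plan is to show the quadratic statement as a special case of the general one, and to prove the general statement by exhibiting $\ZZ[\beta_m]$ as a free module over $\ZZ[\beta_n]$ whose base extension to $\Ocal_{L_n}$ sits inside $\Ocal_{L_m}$. The starting observation is that $\beta_n = g^{m-n}(\beta_m) \in \ZZ[\beta_m]$, so $\ZZ[\beta_n] \subset \ZZ[\beta_m]$, and $\beta_m$ satisfies the monic polynomial $g^{m-n}(x) - \beta_n \in \Ocal_{L_n}[x]$, which has degree $d^{m-n} = [L_m:L_n]$ and is therefore the minimal polynomial of $\beta_m$ over $L_n$.

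From this I would conclude two free-module statements with the \emph{same} basis $\{1, \beta_m, \beta_m^2, \dots, \beta_m^{d^{m-n}-1}\}$: namely, $\ZZ[\beta_m]$ is a free $\ZZ[\beta_n]$-module of rank $d^{m-n}$, and $\Ocal_{L_n}[\beta_m]$ is a free $\Ocal_{L_n}$-module of rank $d^{m-n}$. Comparing coefficient-by-coefficient in this common basis gives an isomorphism of abelian groups
\[
\Ocal_{L_n}[\beta_m]\big/\ZZ[\beta_m]\;\cong\;\bigl(\Ocal_{L_n}/\ZZ[\beta_n]\bigr)^{d^{m-n}},
\]
so that $\bigl[\Ocal_{L_n}[\beta_m] : \ZZ[\beta_m]\bigr] = \bigl[\Ocal_{L_n}:\ZZ[\beta_n]\bigr]^{d^{m-n}}$.

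Chaining the inclusions $\ZZ[\beta_m] \subset \Ocal_{L_n}[\beta_m] \subset \Ocal_{L_m}$ (all of the same $\ZZ$-rank $d^m$) and using multiplicativity of indices, I would obtain
\[
\bigl[\Ocal_{L_m}:\ZZ[\beta_m]\bigr] \;=\; \bigl[\Ocal_{L_m}:\Ocal_{L_n}[\beta_m]\bigr]\cdot \bigl[\Ocal_{L_n}:\ZZ[\beta_n]\bigr]^{d^{m-n}},
\]
so that $\bigl[\Ocal_{L_n}:\ZZ[\beta_n]\bigr]^{d^{m-n}}$ divides $\bigl[\Ocal_{L_m}:\ZZ[\beta_m]\bigr]$. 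Applied to $p^k \mid \bigl[\Ocal_{L_n}:\ZZ[\beta_n]\bigr]$, this yields $(p^k)^{d^{m-n}} \mid \bigl[\Ocal_{L_m}:\ZZ[\beta_m]\bigr]$, and specializing to $g = f$ of degree $d=2$ and $k=1$ recovers the first assertion.

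The only potentially delicate step is the verification that $\Ocal_{L_n}[\beta_m]$ is genuinely free of rank $d^{m-n}$ over $\Ocal_{L_n}$, but this is immediate once one notes that $g^{m-n}(x) - \beta_n$ is monic of the correct degree and equals the minimal polynomial of $\beta_m$ over $L_n$; freeness over $\ZZ[\beta_n]$ is the same argument with $\ZZ[\beta_n]$ in place of $\Ocal_{L_n}$. Everything else is bookkeeping with the multiplicativity of indices, so no real obstacle is anticipated.
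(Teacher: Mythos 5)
Your proposal is correct and follows essentially the same route as the paper: both arguments identify $\ZZ[\beta_m]$ with $\ZZ[\beta_n][\beta_m]$, decompose $\Ocal_{L_n}[\beta_m]$ and $\ZZ[\beta_n][\beta_m]$ as free modules over $\Ocal_{L_n}$ and $\ZZ[\beta_n]$ on the common basis $1,\beta_m,\dots,\beta_m^{d^{m-n}-1}$, and conclude that $\big[\Ocal_{L_n}[\beta_m]:\ZZ[\beta_m]\big]=\big[\Ocal_{L_n}:\ZZ[\beta_n]\big]^{d^{m-n}}$ divides $\big[\Ocal_{L_m}:\ZZ[\beta_m]\big]$. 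Your explicit remarks on freeness (via the minimal polynomial $g^{m-n}(x)-\beta_n$ of degree $[L_m:L_n]$) just make precise a step the paper leaves implicit.
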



\begin{proof}
Suppose $p^k\mid \big[\Ocal_{L_n}:\ZZ[\beta_n]\big]$, and consider $\Ocal_{L_n}[\beta_m]$. We see that $\big[\Ocal_{L_n}[\beta_m]:\ZZ[\beta_m]\big]$ divides $\big[\Ocal_{L_m}:\ZZ[\beta_m]\big]$. As $\ZZ[\beta_n][\beta_m]=\ZZ[\beta_m]$, we compute
\begin{align*}
    \big[\Ocal_{L_n}[\beta_m]:\ZZ[\beta_m]\big]&= \big| \Ocal_{L_n}[\beta_m]/\ZZ[\beta_n][\beta_m]\big|\\
    &= \left| \big(\Ocal_{L_n} \oplus \cdots \oplus \Ocal_{L_n}\beta_m^{d^{m-n}-1}\big)\big/ \big( \ZZ[\beta_n] \oplus \cdots \oplus \ZZ[\beta_n]\beta_m^{d^{m-n}-1}\big)\right|\\
    &= \left| \left(\Ocal_{L_n}/\ZZ[\beta_n]\right)\oplus \left(\Ocal_{L_n}/\ZZ[\beta_n]\right)\beta_m\oplus \cdots \oplus \left(\Ocal_{L_n}/\ZZ[\beta_n]\right)\beta_m^{d^{m-n}-1}\right|\\
    &=\big|\Ocal_{L_n}/\ZZ[\beta_n]\big|^{d^{m-n}}.
\end{align*}
The result follows.
\end{proof}

The contrapositive yields
\begin{corollary}\label{Cor: MonoSubfields}
    If $g^n(x)$ is monogenic, then $g^i(x)$ is monogenic for all $i<n$.
\end{corollary}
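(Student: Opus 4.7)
The plan is to derive this corollary directly as the contrapositive of Proposition \ref{Prop: NonMonoExt}. Since the proposition is already established, the argument will be short and mostly a matter of unwinding definitions, so I do not expect any serious obstacle.

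First I would assume, for contradiction, that $g^n(x)$ is monogenic but $g^i(x)$ fails to be monogenic for some $i<n$. By the definition of monogenicity, the failure of $g^i(x)$ to be monogenic means $[\Ocal_{L_i}:\ZZ[\beta_i]]>1$, so there exists a prime $p$ with $p\mid [\Ocal_{L_i}:\ZZ[\beta_i]]$. Here I am using the iterates of $g$ and the notation $L_j=\QQ(\beta_j)$ exactly as set up in Proposition \ref{Prop: NonMonoExt}; note that the hypothesis of that proposition (namely that the first $n$ iterates of $g$ are irreducible) is available, because $g^n(x)$ is assumed monogenic and in particular irreducible, which forces every earlier iterate to be irreducible as well.

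Next I would apply Proposition \ref{Prop: NonMonoExt} with $k=1$, $m=n$, and the chosen $i<n$: it gives $p^{d^{n-i}} \mid [\Ocal_{L_n}:\ZZ[\beta_n]]$, where $d=\deg g$. In particular $p\mid [\Ocal_{L_n}:\ZZ[\beta_n]]$, so $[\Ocal_{L_n}:\ZZ[\beta_n]]>1$, contradicting the monogenicity of $g^n(x)$. This completes the contrapositive argument.

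The only subtlety worth flagging in the write-up is ensuring the irreducibility hypothesis needed to talk about $L_i$ and $\beta_i$ is actually present; this is immediate from the assumption that $g^n(x)$ is monogenic, because monogenic polynomials in $\ZZ[x]$ are irreducible, and irreducibility of $g^n$ descends to irreducibility of every $g^i$ for $i\leq n$. Beyond that, the corollary is a one-line application of the preceding proposition.
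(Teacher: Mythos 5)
Your argument is exactly the paper's: the corollary is stated there as the contrapositive of Proposition \ref{Prop: NonMonoExt}, and your proof by contradiction is just that contrapositive spelled out, with the (correct) extra remark that irreducibility of $g^n$ forces irreducibility of the earlier iterates so the proposition applies. No gaps; this matches the intended proof.
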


\section{Prime factorizations of 2}\label{Sec: Factorization}
In this section we describe the factorization of $(2)$ in $K_n$ when $f(x)$ is $2$-maximal. 
Theorem \ref{Thm: Main} fully describes the cases where 2 is ramified, but we give a quick summary:

\subsection{Ramified}\label{Sec: RamifiedSplitting}
Note that in order for $(2)$ to be ramified it is necessary and sufficient that $2\mid b$. When $2\mid b$ and $c\equiv 2 \pmod 4$, $f(x)$ is $2$-Eisenstein and so is $f^n(x)$. Thus $2$ does not divide $\big[\Ocal_{K_n}:\ZZ[\alpha_n]\big]$. Hence, since $f^n(x) = x^{2^n}$ in $\FF_2[x],$ we have $2\Ocal_{K_n}=\gp^{2^n}$, with $\gp=(2,\alpha_n)$.

When $2\mid b$ and $c$ is odd, then $f(x)$ is $2$-maximal if and only if $b+c\equiv 1 \pmod 4$. We see that $f^{2k}(x)$ and $f^{2k+1}(x+1)$ are $2$-Eisenstein, so $2\Ocal_{K_n}=\gp^{2^n}$ with $\gp=(2,\alpha_n)$ when $n$ is even and $\gp=(2,\alpha_n+1)$ when $n$ is odd. 

In the remaining ramified cases, $f^n(x)$ is not $2$-maximal, so the factorization of $(2)$ does not correspond to factorization of $f^n(x)$ in $\FF_2[x]$.  

\subsection{Unramified}
When $(2)$ is unramified, 
$\ZZ[\alpha_n]$ is $2$-maximal for every $n$, so Dedekind-Kummer factorization shows the prime ideal factorization of $(2)$ corresponds to the factorization of $f^n(x)$ into irreducibles in $\FF_2[x]$. 

We denote $F(x)=x^2+x+1$ and $G(x)=x^2+x$ in $\FF_2[x]$ for the two quadratics with non-zero $x$ coefficient.

\begin{lemma}\label{Lem: RedFactors}
For every $n\geq 1$, $G^{n+1}(x)=G^n(x)F^n(x)$.
\end{lemma}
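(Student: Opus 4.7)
The plan is to reduce the claim to the simple identity $G^n(x)+1=F^n(x)$, which then lets us factor $G^{n+1}$ directly. The main observation is that over $\FF_2$ we have $F(x)=G(x)+1$ and, crucially, $F(y+1)=F(y)$, since
\[F(y+1)=(y+1)^2+(y+1)+1=y^2+1+y+1+1=y^2+y+1=F(y).\]
This ``translation invariance'' of $F$ is what makes the induction close.

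First I would unfold one iterate of $G$: letting $y=G^n(x)$,
\[G^{n+1}(x)=G(G^n(x))=y^2+y=y(y+1)=G^n(x)\bigl(G^n(x)+1\bigr).\]
So the lemma follows as soon as I establish
\[G^n(x)+1=F^n(x)\qquad\text{in }\FF_2[x]\text{ for every }n\geq 1. \tag{$\ast$}\]

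I would prove $(\ast)$ by induction on $n$. The base case $n=1$ is immediate: $G(x)+1=x^2+x+1=F(x)$. For the inductive step, assume $G^n(x)+1=F^n(x)$ and compute in two ways. On the one hand, using $G(y)+1=F(y)$,
\[G^{n+1}(x)+1=G(G^n(x))+1=F(G^n(x)).\]
On the other hand, using the inductive hypothesis and the translation invariance $F(z+1)=F(z)$,
\[F^{n+1}(x)=F(F^n(x))=F(G^n(x)+1)=F(G^n(x)).\]
The two right-hand sides agree, so $G^{n+1}(x)+1=F^{n+1}(x)$, completing the induction. Substituting $(\ast)$ into the factorization of $G^{n+1}(x)$ gives $G^{n+1}(x)=G^n(x)F^n(x)$.

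There is no real obstacle here; the only thing to notice is the invariance $F(y+1)=F(y)$ over $\FF_2$, which is what allows $F^{n+1}(x)$ to be rewritten as $F(G^n(x))$ after applying the inductive hypothesis.
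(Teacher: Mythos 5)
Your proposal is correct and follows essentially the same route as the paper: both factor $G^{n+1}=G^n\cdot(G^n+1)$ and then prove $F^n=G^n+1$ by induction, the only cosmetic difference being that you close the induction via the invariance $F(y+1)=F(y)$ whereas the paper uses the equivalent fact $G(y+1)=G(y)$.
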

\begin{proof}
We have 
\[G^2(x) = (x^2+x)^2+x^2+x=(x^2+x)(x^2+x+1)=G(x)F(x),\]
and 
\[G^{n+1}(x)=\big(G^n(x)\big)^2+G^n(x)=G^n(x)\left(G^n(x)+1\right).\]
We would like to show that $F^n(x)=G^n(x)+1$, and we proceed by induction. When $n=1$, it follows from the definitions, so suppose $F^k(x)=G^k(x)+1$ for some $k\geq 1$. 

Then $F^{k+1}(x)+1=F^k(x)^2+F^k(x)=G\big(F^k(x)\big)$, which is $G(G^k(x)+1)$ by induction, and we would like to show this equals $G^{k+1}(x)$. For every polynomial $P(x)$ in $\FF_2[x]$, 
\[
G(P(x)+1)=(P(x)+1)^2+P(x)+1=P(x)^2+P(x)=G\big(P(x)\big),
\]
which completes the proof. 
\end{proof}

This shows that the factorization of $G^{n}(x)$ is determined by the factorizations of $F^k(x)$ for $1\leq k \leq n-1$, specifically:
\[G^n(x)=x(x+1)F(x)F^2(x)\cdots F^{n-1}(x).\]
To determine the factorization of $2\Ocal_K$ in either case, it is enough to factor each $F^k(x).$

\begin{lemma}\label{Lem: IrredFactors}
For every $n$, $F^n(x)$ is a product of distinct irreducible polynomials in $\FF_2[x]$ of degree $2^m$, where $2^{m-1}\leq n<2^m$.
\end{lemma}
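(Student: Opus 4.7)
My plan is to handle both assertions through a Frobenius analysis of the roots of $F^n$ in $\overline{\FF_2}$, reducing the minimal-polynomial degree question to determining when a specific iterate of $F$ equals $1$.

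For \emph{distinctness} of the irreducible factors, I would note that $F'(x)=2x+1=1$ in $\FF_2[x]$, so the chain rule yields $(F^n)'(x)=\prod_{i=0}^{n-1}F'\big(F^i(x)\big)=1$. Hence $F^n$ is separable; it has no repeated roots and consequently no repeated irreducible factor.

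For the \emph{degree}, fix a root $\alpha$ of $F^n$ in $\overline{\FF_2}$ and set $\alpha_j:=F^j(\alpha)$. Since $F(0)=F(1)=1$, every root of $F^n$ has depth exactly $n$: one has $\alpha_n=0$, $\alpha_j=1$ for every $j\geq n+1$, and $\alpha_j\notin\{0,1\}$ for $0\leq j<n$ (any such equality would propagate forward and force $\alpha_n=1$ instead of $0$). The key ingredient, which I would prove by induction on $k\geq 0$ for an arbitrary $\beta\in\overline{\FF_2}$, is the identity
\[
\beta^{2^{2^k}}=\beta+F^{2^k}(\beta)+1.
\]
The base case $\beta^2=\beta+F(\beta)+1$ is just $F(\beta)=\beta^2+\beta+1$ rearranged. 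For the step, raise the $(k-1)$-case to the $2^{2^{k-1}}$ power, use the characteristic-$2$ freshman's dream $(x+y+1)^{2^j}=x^{2^j}+y^{2^j}+1$, and apply the inductive hypothesis a second time to $\gamma:=F^{2^{k-1}}(\beta)$ to compute $\gamma^{2^{2^{k-1}}}=\gamma+F^{2^{k-1}}(\gamma)+1=F^{2^{k-1}}(\beta)+F^{2^k}(\beta)+1$; the two copies of $F^{2^{k-1}}(\beta)$ then cancel.

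Specializing to $\beta=\alpha$, membership $\alpha\in\FF_{2^{2^k}}$ is equivalent to $\alpha^{2^{2^k}}=\alpha$, hence to $\alpha_{2^k}=1$, and by the orbit description this occurs iff $2^k\geq n+1$. The minimal such $k$ is exactly the integer $m$ with $2^{m-1}\leq n<2^m$. Since the subfields of $\FF_{2^{2^m}}$ form the chain $\FF_{2^{2^j}}$ for $0\leq j\leq m$, minimality of $m$ forces $[\FF_2(\alpha):\FF_2]=2^m$. A simple degree count $2^n/2^m=2^{n-m}$ then gives the number of factors. The main obstacle is the inductive identity above: its clean form is what makes the argument collapse to a transparent bookkeeping about when the $F$-orbit of $\alpha$ lands in the tail of ones, so I would spend most of the write-up carefully executing that induction.
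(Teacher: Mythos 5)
Your proof is correct, and it takes a genuinely different route from the paper's. The paper proves the lemma globally: it shows $F^n(x)\mid G^{2^m}(x)=x^{2^{2^m}}-x$ for $n<2^m$ (so every irreducible factor has degree a power of $2$ dividing $2^m$), shows distinct iterates $F^i,F^j$ are coprime, and then pins down the exact degree by a counting argument --- the product $\prod_{n=2^{m-1}}^{2^m-1}F^n(x)$ has degree $2^{2^m}-2^{2^{m-1}}$, which matches $2^m$ times the number of irreducibles of degree $2^m$, forcing each to appear exactly once (this also delivers distinctness of the factors). You instead argue root by root: separability comes for free from $(F^n)'=1$, and the degree of each root $\alpha$ is read off from the identity $\beta^{2^{2^k}}=\beta+F^{2^k}(\beta)+1$, which converts membership in $\FF_{2^{2^k}}$ into the orbit condition $F^{2^k}(\alpha)=1$. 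It is worth noting that your key identity is really the paper's polynomial identity $G^{2^k}(x)=x^{2^{2^k}}-x$ in disguise (via $F^j=G^j+1$), but you prove it by a different doubling induction and, more importantly, deploy it differently. Your argument is more local and self-contained --- it needs neither the coprimality of distinct iterates nor the census of irreducible polynomials of degree $2^m$ --- and it determines the exact degree of every individual root directly. The paper's counting approach buys an extra global fact that it uses later (the factors of $F^{2^{m-1}}(x),\dots,F^{2^m-1}(x)$ collectively exhaust \emph{all} irreducibles of degree $2^m$, which feeds into Theorem \ref{Thm: UnramFactors} and the remarks on normal polynomials); your method would recover that only with a supplementary count. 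Both proofs are complete and correct.
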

\begin{proof}
To prove the lemma, we start by showing that $F^n(x)$ divides $x^{2^{2^m}}-x$ when $n<2^m$. Using the fact that an irreducible polynomial of degree $d$ divides $x^{2^r}-x$ if and only if $d$ divides $r$ \cite[2.1.29]{Handbook}, the degree of every irreducible factor of $F^n(x)$ is a power of $2$ and at most $2^m$. 

From Lemma \ref{Lem: RedFactors}, we have $F^n(x)\mid G^{2^m}(x)$ whenever $n<2^m$. We will show that $G^{2^m}=x^{2^{2^m}}-x$ by induction. We have $G(x)=x^2-x$, and 
    \begin{align*}
        G^{2^m}(x)&=G^{2^{m-1}}\left(G^{2^{m-1}}(x)\right) \\
        &=\left(x^{2^{2^{m-1}}}-x\right)^{2^{2^{m-1}}}-\left(x^{2^{2^{m-1}}}-x\right) \\
        &=x^{2^{2^m}}-x.
    \end{align*}

Next, we show that $F^i(x)$ and $F^j(x)$ are coprime when $i\neq j$ by showing that $F^i \mid F^j+1$ whenever $i<j$. In the proof of Lemma \ref{Lem: RedFactors}, we showed $F^j(x)+1=G^j(x),$ and if $i<j$, $F^i(x)$ is a factor of $G^j(x)$. Hence, there are no irreducible factors shared by any distinct iterates of $F(x)$. 
We complete the proof by counting the number of irreducible polynomials of degree $2^k$ for any $k\leq m$ to show that every irreducible polynomial with degree $2^k$ is a factor of exactly one $F^n(x)$.

Working inductively, let $m\geq 2$ and suppose that for each $1\leq k < m$ every irreducible of degree $2^k$ in $\FF_2[x]$ is a factor of some $F^n(x)$ with $n<2^{m-1}$. This is true when $m=2$, since there is only one irreducible quadratic in $\FF_2[x]$, and it is $F(x)$. We would like to show that every irreducible polynomial with degree $2^m$ is a factor of some $F^n(x)$ with $n<2^m$. If $n<2^{m-1},$ then each irreducible factor of $F^n(x)$ has degree less than $2^{m-1}$, so consider the product

\[
\prod_{n=2^{m-1}}^{2^m-1}F^n(x).
\]

Each of its irreducible factors has degree equal to a power of $2$ and less than $2^m$. By assumption every irreducible polynomial with degree $2^k$ for $k<m$ is a factor of some $F^n(x)$ with $n<2^{m-1}$. Since $F^i(x)$ and $F^j(x)$ are coprime, those polynomials with degree less than $2^m$ cannot appear in the product, so every irreducible factor of the product must have degree $2^m$. 

The product has degree 
$$\sum_{n=2^{m-1}}^{2^m-1}2^n=2^{2^m}-2^{2^{m-1}},$$ and there are $(2^{2^m}-2^{2^{m-1}})/2^m$ irreducible polynomials of degree $2^m$ \cite[2.1.24]{Handbook}, so each appears exactly once in the product.

Thus $F^n(x)$ factors into $2^{n-m}$ irreducible polynomials of degree $2^m$, where $2^m$ is the smallest power of 2 larger than $n$, that is, $2^{m-1}\leq n < 2^m$.  
\end{proof}

Dedekind-Kummer factorization and the lemmas above yield the following:
\begin{theorem}\label{Thm: UnramFactors}
Let $f(x) = x^2+bx+c$ be an irreducible polynomial in $\ZZ[x]$ with $2\dnd b$, $f^{n}(x)$ the $n$-fold iterate of $f(x)$, $K_n$ the number field generated by $f^{n}(x)$, and $\Ocal_{K_n}$ its ring of integers. 

When $2\dnd c$, let $2^m$ denote the smallest power of $2$ greater than $n$, $2^{m-1}\leq n <2^m$. Then   
\[2\Ocal_{K_n}=\prod_{i=1}^{2^{n-m}}\gp_i,\]
where the residue class degree of each $\gp_i$ is $[\Ocal_{K_n}/\gp_i : \FF_2]=2^m$. 

When $2\mid c$, let $2^m$ denote the smallest power of $2$ greater than $n-1$, so $2^{m-1}< n \leq 2^m$. Then for $n>1$, 
\begin{equation}\label{Eq: 2splitting}
2\Ocal_{K_n}=\gp_{0,1}\gp_{0,2} \prod_{r=1}^{m-1} \left(\prod_{i=1}^{2^{2^r-r}-2^{2^{r-1}-r}}\gp_{r,i}\right)\prod_{j=2^{m-1}}^{n-1}\left(\prod_{i=1}^{2^{j-m}}\gp_{m,i}\right),
\end{equation}
where the residue class degree of $\gp_{r,i}$ is $\big[\Ocal_{K_n}/\gp_{r,i} : \FF_2\big]=2^r$.

\end{theorem}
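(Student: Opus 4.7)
The plan is to reduce the claim to Dedekind--Kummer factorization and then combine the two preceding lemmas. First I would note that the hypothesis $2\nmid b$ is exactly the first bullet of Theorem \ref{Thm: Main}, so every iterate $f^n(x)$ is $2$-maximal and $2$ is unramified in $K_n$. Consequently the prime ideal factorization of $2\Ocal_{K_n}$ is determined, via Dedekind--Kummer, by the factorization of $\overline{f^n(x)} \in \FF_2[x]$ into distinct irreducible factors. Since reduction mod $2$ is a ring homomorphism that commutes with polynomial composition, $\overline{f^n(x)} = \overline{f}^{\,n}(x)$, which equals $F^n(x)$ when $c$ is odd and $G^n(x)$ when $c$ is even.

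For the odd-$c$ case, Lemma \ref{Lem: IrredFactors} gives the factorization of $F^n(x)$ into $2^{n-m}$ distinct irreducibles of degree $2^m$, and Dedekind--Kummer immediately yields the first formula in the theorem. For the even-$c$ case, I would apply Lemma \ref{Lem: RedFactors} to expand
\[
G^n(x) = x(x+1)\, F(x)\, F^2(x) \cdots F^{n-1}(x),
\]
and then invoke Lemma \ref{Lem: IrredFactors} to factor each $F^k(x)$: for $k$ in the range $2^{r-1} \le k < 2^r$, $F^k(x)$ is a product of $2^{k-r}$ distinct irreducibles, each of degree $2^r$.

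The remaining step is purely bookkeeping: regrouping the irreducible factors by degree. The two linear factors $x$ and $x+1$ of $G$ supply $\gp_{0,1}$ and $\gp_{0,2}$ of residue degree $1$. For each $r$ with $1 \le r \le m-1$, the full range $k \in [2^{r-1}, 2^r - 1]$ of $F^k$ appears in the product, so by Lemma \ref{Lem: IrredFactors} their combined contribution is
\[
\sum_{k=2^{r-1}}^{2^r-1} 2^{k-r} \,=\, 2^{\,2^r - r} - 2^{\,2^{r-1} - r}
\]
distinct primes of residue degree $2^r$, which matches the middle product in \eqref{Eq: 2splitting}. For $r = m$, only $F^j$ with $j \in [2^{m-1}, n-1]$ contribute, each supplying $2^{j-m}$ primes of residue degree $2^m$, giving the final product.

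The only real obstacle is organizing the indexing so that it matches \eqref{Eq: 2splitting} exactly, together with keeping track of the constraint $2^{m-1} < n \le 2^m$ that determines where the cut-off at $r=m$ occurs. No algebraic content beyond Lemmas \ref{Lem: RedFactors} and \ref{Lem: IrredFactors} is required.
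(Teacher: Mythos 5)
Your proposal is correct and follows exactly the route the paper takes: the paper's entire proof is the one-line remark that Dedekind--Kummer factorization together with Lemmas \ref{Lem: RedFactors} and \ref{Lem: IrredFactors} yields the theorem, and your write-up simply makes the reduction mod $2$ (to $F^n$ or $G^n$) and the degree-by-degree counting explicit. The bookkeeping identity $\sum_{k=2^{r-1}}^{2^r-1} 2^{k-r} = 2^{2^r-r}-2^{2^{r-1}-r}$ and the cut-off at $j\le n-1$ for the degree-$2^m$ factors both check out.
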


The first collection of products of ideals in \eqref{Eq: 2splitting} corresponds to all irreducible polynomials with degrees $2^r$ for $1\leq r \leq m-1$, while the remaining products correspond to a subset of the irreducibles of degree $2^m$, or all of them when $n=2^m$.


\subsection{Examples}
Let $g(x)=x^2-x+2$. Note that $g(x)$ is dynamically irreducible and $2$ is unramified. Theorem \ref{Thm: UnramFactors} implies that $2\Ocal_{K_5}$ splits into two factors of inertia degree $1$, one of degree $2$, three of degree $4$, and two of degree $8$. Those of degree less than $8$ correspond to all irreducible polynomials of degree $2^r$ in $\FF_2[x]$, where $0\leq r\leq 2$. A calculation with SageMath\cite{sagemath} gives



\begin{align*}
(2) = & (2, \alpha_5)(2, \alpha_5 + 1)(2, \alpha_5^2 + \alpha_5 + 1) \\
& (2, \alpha_5^4 + \alpha_5^3 + \alpha_5^2 + \alpha_5 + 1)(2, \alpha_5^4 + \alpha_5^3 +1)(2, \alpha_5^4 + \alpha_5 + 1) \\
& (2, \alpha_5^8 + \alpha_5^6 +\alpha_5^5 + \alpha_5^4 +\alpha_5^3 +\alpha_5 + 1)\\
& (2, \alpha_5^8 + \alpha_5^6 +\alpha_5^5 +\alpha_5^3 + 1).
\end{align*}

Let $f(x)=x^2-x+1$. Again, we have dynamic irreducibility and $2$ is unramified. As $c$ is odd, $2\Ocal_{K_5}$ splits into factors of equal inertia degree. Since $2^2\leq 5 < 2^3$, each factor has degree $8$, and thus there are four factors. Again, using SageMath, we have:


\begin{align*}
(2)=&(2, \alpha_5^8 + \alpha_5^6 + \alpha_5^5 + \alpha_5^2 + 1)\\
&(2, \alpha_5^8 + \alpha_5^6 + \alpha_5^5 + \alpha_5 + 1)\\
&(2, \alpha_5^8 +\alpha_5^5 + \alpha_5^3 + \alpha_5 + 1)\\
&(2, \alpha_5^8 + \alpha_5^5 + \alpha_5^4 +\alpha_5^3 + \alpha_5^2 + \alpha_5 + 1).
\end{align*}

Note that we do not need to know if $g^n(x)$ or $f^n(x)$ are monogenic to explicitly determine $2\Ocal_{K_n}$. As $2$ is unramified, it does not divide the index $\big[\Ocal_{K_n}:\ZZ[\alpha_n]\big]$. 

When $n=2^m-1$, the factors of $F^n(x)$ correspond to all polynomials with non-zero trace, that is, all normal\footnote{A polynomial is \textit{normal} if the roots generate a Galois-invariant basis.} polynomials over $\FF_2$ of degree $2^m$ by another counting argument. (See \cite[5.2.9]{Handbook} for the correspondence between non-zero trace and normality and \cite[5.2.12]{Handbook} for the matching count.) The factors of $F^{2^m-1}(x)$ being every irreducible with the right degree and certain coefficients prompts the following question: 
{\center 
    Is it true that the irreducible factors of $F^{2^{m-1}}(x)$ are the irreducibles $\sum_{i=0}^{2^m} a_ix^i$ with \[a_{2^m-1}=0 \text{ and } a_{2^m-1-2^i}=1 \text{ for all } 0\leq i < m?\]
}

\section{Application: Dynamically Monogenic PCF Quadratic Polynomials}\label{Sec: Examples}
In this section, we construct three infinite families of iterated quadratic towers of monogenic fields. These families come from the three families of \emph{post-critically finite}, monic, quadratic, integer polynomials characterized in \cite{Goksel}. We were initially motivated by \cite{Castillo}, where Castillo considers $f(x)=x^2-2$. One has
\begin{example}\label{Ex. p=2}
	\begin{align*}
		f^1(x) &= x^2-2,\\
		f^2(x) &= x^{4} - 4x^{2} + 2,\\
		f^3(x) &= x^{8} - 8x^{6} + 20x^{4} - 16x^2 +2,\\
		f^4(x) &= x^{16} - 16 x^{14} + 104x^{12} - 352x^{10} + 660x^8 - 672x^6 + 336x^4 -64x^2 + 2,\\
		f^5(x) &= x^{32} - 32 x^{30} + \cdots + 5440 x^{4} - 256 x^{2} + 2.
	\end{align*}
	We can see that $f^n(0)=\big(f^{n-1}(0)\big)^2-2$, so that $f^n(0)=2$ for all $n>1$. Note that $0$ is the unique critical point of $x^2-2$, and that its forward orbit is a finite set. Thus, applying Theorem \ref{Thm: Main}, for example, every iterate of $f(x)$ is monogenic! Castillo shows this and motivates the following question.

 \begin{question}\label{Q: AlwaysMono} What are the other $f(x)$ such that $f^n(x)$ is monogenic for all $n\geq 1$?
\end{question}
\end{example}


In investigating this question, we found \cite{AitkenIterated}, where the authors analyze ramification in iterated extensions. In Section 6, they conduct a deep investigation of $f(x)=x^2-2$. Indeed, they show that $f^n(x)-t_0$ is monogenic for $n\geq 1$ when $t_0\equiv 0,1\bmod 4$ and $t_0\pm2$ are both squarefree, which is the third family in Proposition \ref{Prop: MonoPCFs}. Another example of dynamically monogenic PCF polynomials can be found in \cite{GassertChebyshev}, where Gassert gives conditions under which prime-power Chebyshev polynomials are monogenic.

A polynomial is \emph{post-critically finite}, or PCF, if the forward orbit of its critical points under iteration, the \emph{critical orbit}, is a finite set. Thus, if $-D_f/4$ is a fixed point of $f^i(x)$, then $f(x)$ is PCF. We use the notation and results of \cite{Goksel}.

\begin{proposition}\label{Prop: MonoPCFs}
    The following PCF polynomials yield families in which every iterate is monogenic:
    \begin{align*}
	f_a(x) &= (x+a)^2-a   = x^2+2ax+a^2-a ,  &\, D/4&=a,\\
	g_a(x) &= (x+a)^2-a-1 = x^2+2ax+a^2-a-1, &\, D/4&=a+1, \\
        h_a(x) &=(x+a)^2-a-2  = x^2+2ax+a^2-a-2, &\, D/4&=a+2.
\end{align*}
Explicitly, every iterate of $f_a(x)$ and $g_a(x)$ is monogenic if and only if $a$ is squarefree and $D/4$ is squarefree and congruent to $2$ or $3$ modulo $4$. Every iterate of $h_a(x)$ is monogenic if and only if $a-2$ and $D/4=a+2$ are squarefree and $D/4$ is congruent to $2$ or $3$ modulo 4. 
\end{proposition}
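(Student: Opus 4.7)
My plan is to apply Theorem \ref{Thm: Main} directly to each of the three families. Since $b=2a$ is always even, the 2-maximality clause reduces to checking whether $c \equiv 2 \pmod{4}$ or $b+c \equiv 1 \pmod{4}$. For odd primes $p$, the clause on every iterate becomes $p^2 \nmid f^n(-b/2) = f^n(-a)$ for every $n \geq 1$ (the factor $4^{2^n}$ is irrelevant for odd $p$). The crucial observation is that $f_a$, $g_a$, $h_a$ are post-critically finite, so the forward orbit of the unique critical point $-a$ is finite, collapsing the infinite family of odd-prime conditions into just a handful.

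The 2-maximality step is a case analysis on $a \pmod{4}$. Computing $c$ and $b+c$ modulo $4$ in each family, one finds: $f_a$ is 2-maximal iff $a \equiv 2,3 \pmod{4}$, i.e.\ $D/4 = a \equiv 2,3 \pmod{4}$; $g_a$ is 2-maximal iff $a \equiv 1,2 \pmod{4}$, i.e.\ $D/4 = a+1 \equiv 2,3 \pmod{4}$; and $h_a$ is 2-maximal iff $a \equiv 0,1 \pmod{4}$, i.e.\ $D/4 = a+2 \equiv 2,3 \pmod{4}$. So each family satisfies the uniform condition $D/4 \equiv 2,3 \pmod{4}$.

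For the odd-prime step I will trace the critical orbits explicitly. One verifies that $f_a(-a) = -a$ is a fixed point; that $g_a$ has the 2-cycle $\{-a,\,-a-1\}$ since $g_a(-a) = -a-1$ and $g_a(-a-1) = -a$; and that $h_a$ satisfies $-a \mapsto -a-2 \mapsto 2-a$, with $2-a$ then fixed. The criterion of Theorem \ref{Thm: Main} thus collapses to: $p^2 \nmid a$ for $f_a$; $p^2 \nmid a$ and $p^2 \nmid a+1$ for $g_a$; and $p^2 \nmid a+2$ and $p^2 \nmid a-2$ for $h_a$, for every odd prime $p$. Equivalently, $a$ is squarefree away from $2$ in the first case, $a$ and $a+1$ in the second, and $a-2$ and $a+2$ in the third.

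Combining the 2-maximality and odd-prime conditions gives exactly the claim once one checks that each mod-4 2-maximality congruence automatically forces $4$ not to divide the integers whose squarefreeness is asserted — so \emph{squarefree} in the statement can be read unambiguously without separating the prime $2$. There is no real obstacle; the conceptual work is the PCF reduction, which turns an infinite family of $p$-maximality conditions into a finite, elementary verification, and the rest is bookkeeping reconciling the mod-4 congruences with the squarefreeness hypotheses.
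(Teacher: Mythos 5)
Your proposal is correct and follows essentially the same route as the paper's proof: apply Theorem \ref{Thm: Main}, do the mod-4 case analysis for 2-maximality, and use post-critical finiteness to collapse the odd-prime criterion to squarefreeness of the finitely many values in the critical orbit; your orbit computations ($-a$ fixed for $f_a$, the 2-cycle $\{-a,-a-1\}$ for $g_a$, and $-a\mapsto -a-2\mapsto 2-a$ with $2-a$ fixed for $h_a$) and all the congruence checks match the paper's. The one step you skip is that the odd-prime part of Theorem \ref{Thm: Main} is stated under the hypothesis that the iterates $f^n(x)$ are irreducible; the paper covers this by observing that the 2-maximality conditions force $\Disc(f)\equiv 8$ or $12 \pmod{16}$, so the Ayad--McQuillan stability criterion quoted just before the proof of Theorem \ref{Thm: Main} guarantees all iterates are irreducible.
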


\begin{remark}\label{Rmk: Specializations}
    Notice that $f^n_a(x-a)=p^n(x)-a$, where $p(x)=x^2$. Further, $g_a^n(x-a)=q^n(x)-a$ and $h_a^n(x-a)=r^n(x)-a$, where $q(x)=x^2-1$ and $r(x)=x^2-2$, respectively. Thus Proposition \ref{Prop: MonoPCFs} gives necessary and sufficient conditions for the specializations of iterates of the three families $x^2$, $x^2-1$, and $x^2-2$ to be monogenic.
\end{remark}

The proof is an application of Theorem \ref{Thm: Main}. Note that in all cases 2 is totally ramified.

\begin{proof}
    For $f_a(x)$, Theorem \ref{Thm: Main} shows that every iterate is 2-maximal if and only if $a\equiv 2$ or 3 modulo 4. Thus, we see the necessity of this condition. This also implies that every iterate of $f(x)$ is irreducible. Now, if $p$ is an odd prime, we see $f^n\big(-\frac{b}{2}\big)=f^n(-a)=-a=-D/4.$ Thus, for $p$-maximality, it is necessary and sufficient that $D/4=a$ is not divisible by $p^2$. Hence, combining with our constraints for the prime 2, we have our necessary and sufficient conditions.

    For $g_a(x)$, Theorem \ref{Thm: Main} shows that every iterate is 2-maximal if and only if $a\equiv 1$ or 2 modulo 4. Thus, we see the necessity of the condition that $D/4=a+1\equiv 2,3 \bmod 4$. This also implies that every iterate of $g(x)$ is irreducible. Now, if $p$ is an odd prime, we see $g\big(-\frac{b}{2}\big)=g(-a)=-a-1$ and $g(-a-1)=-a$. Thus, for $p$-maximality, it is necessary and sufficient that $D/4=a+1$ and $a$ are both not divisible by $p^2$. Hence, combining with our constraints for the prime 2, we have our necessary and sufficient conditions.

    For $h_a(x)$, Theorem \ref{Thm: Main} shows that every iterate is 2-maximal if and only if $a\equiv 0$ or 1 modulo 4. Thus, we see the necessity of the condition that $D/4=a+2\equiv 2,3 \bmod 4$. This also implies that every iterate of $h(x)$ is irreducible. Now, if $p$ is an odd prime, we see $h\big(-\frac{b}{2}\big)=h(-a)=-a-2$, $h(-a-2)=-a+2$, and $h(-a+2)=-a+2$. Thus, for $p$-maximality, it is necessary and sufficient that $D/4=a+2$ and $a-2$ are both not divisible by $p^2$. Hence, combining with our constraints for the prime 2, we have our necessary and sufficient conditions.
\end{proof}

Note that in each family, there are infinitely many integers satisfying the squarefree and congruence conditions. These three families of polynomials also have finite ramification sets, 
since the discriminant only depends on the orbit of the critical points.

One can see that the critical orbit is key to understanding monogenicity. Forthcoming work of the authors and Joachim K\"onig gives general criteria for the monogenicity of arbitrary integer polynomials in terms of the critical points and uses this to construct a variety of families of dynamically monogenic polynomials.

\section*{Acknowledgments} The authors would like to thank Istv\'an Ga\'al for hosting the conference that inspired this collaboration.

\bibliography{Bibliography}
\bibliographystyle{alpha}

\end{document}